\title{Riordan Groups in Higher Dimensions}
\author{Anthony G. O'Farrell}
\address{Mathematics and Statistics\\Maynooth University\\Co Kildare\\W23 HW31\\Ireland}
\email{anthony.ofarrell@mu.ie}
\date{\today:\currenttime}
\subjclass[2020]{20H25, 15H30, 05A05, 05A40, 11B99} 
\keywords{Riordan array, Riordan matrix, formal power series
reversibility, group, conjugacy, Riordan group, holomorph, automorphism group,
Laurent series}
\newcommand{\Aut}{\textup{Aut}}
\newcommand{\diag}{\textup{diag}}
\newcommand{\End}{\textup{End}}
\newcommand{\GL}{\textup{GL}}
\newcommand{\gl}{\textup{gl}}
\newcommand{\hcf}{\textup{hcf}}
\newcommand{\Hol}{\textup{Hol}}
\newcommand{\HOT}{\textup{\small HOT}}
\newcommand{\iso}{\textup{iso}}
\newcommand{\Kalg}{{K\textup{-alg}}}
\newcommand{\Kmod}{{K\textup{-mod}}}
\newcommand{\op}{\textup{op}}
\newcommand{\spt}{\textup{spt}}
\newcommand{\sinf}{\textup{sinf}}
\newcommand{\ONE}{1\hskip-4pt1}
\newcommand{\A}{\mathcal{A}}
\newcommand{\AP}{\mathfrak{A}}
\newcommand{\N}{\mathbb{N}}
\newcommand{\Z}{\mathbb{Z}}
\newcommand{\R}{\mathbb{R}}
\renewcommand{\C}{\mathbb{C}}
\newcommand{\EAP}{\widetilde{\mathfrak{A}}}
\newcommand{\ELA}{\widetilde{\mathfrak{L}}}
\newcommand{\F}{\mathcal{F}}
\newcommand{\G}{\mathcal{G}}
\newcommand{\K}{\mathcal{K}}
\newcommand{\M}{\mathcal{M}}
\newcommand{\TI}{\mathcal{T}}
\newcommand{\V}{\mathcal{L}}
\newcommand{\RI}{\mathcal{R}}
\newcommand{\RS}{\widetilde{\mathcal{R}}}
\newcommand{\RV}{{\mathcal{V}}}
\newcommand{\RVV}{{\V^\times\ltimes\K\Sigma}}
\newcommand{\SH}{\mathfrak{S}}
\newcommand{\LA}{\mathfrak{L}}
\newcommand{\hatS}{\widehat{S}}
\newcommand{\pw}[1]{^{\circ #1}}
\newcommand{\ignore}[1]{}
\newtheorem{definition}{Definition}[section]
\newtheorem{theorem}{Theorem}
\newtheorem{lemma}{Lemma}[section]
\newtheorem{corollary}{Corollary}[theorem]
\newtheorem{proposition}{Proposition}[section]
\begin{document}

\begin{abstract}The classical Riordan groups 
associated to a given commutative ring
are groups of infinite matrices (called Riordan arrays)  
associated to pairs of formal power series in one variable.
The Fundamental Theorem of Riordan Arrays relates
matrix multiplication to two group actions on
such series, namely formal (convolution) multiplication
and formal composition. 
We define the analogous Riordan groups involving formal power series
in several variables, and establish the analogue
of the Fundamental Theorem in that context. We discuss related
groups of Laurent series and pose some questions.
\end{abstract}
  
\maketitle

\section{Introduction}

The relation between the composition of formal
series in one variable and matrix multiplication was indentified
at least as early as Eri Jabotinsky's paper 
\cite{Jabotinsky1} from
1947. It has been much exploited.  
The original Riordan group was introduced by 
Shapiro,  Getu, Woan and Woodson in 1991 \cite{ShapiroGWW} 
(see also \cite{Shapiro, Barry}) 
and named in honour of John F. Riordan.  Its elements are
infinite matrices with integral entries, and it is an
important tool in combinatorics. Riordan arrays and groups
with rational, real, complex and other kinds
of entries have been studied.

In this paper we explain how to make the corresponding
groups in several variables.
The several-variable Riordan group and Riordan semigroup
defined below are very closely related to structures
studied by 
Luis Verde-Star \cite{Verde-Star} and by Cheon at al \cite{CHK}.  
The main results give a several-variable analogue
of the Fundamental Theorem on Riordan Arrays,
and we make some remarks about reversibility in these groups.
(An element $g$ of a group $G$ is
\textit{reversible} \cite{OFS} if it is conjugate to its inverse,
i.e. there exists $h\in G$ such that $h^{-1}gh=g^{-1}$.)
There has been
progress on classifying reversibles in one dimension, particularly by Luzon, Moron and Prieto-Martinez \cite{LMP}.
Significant and interesting problems remain, even 
in one dimension.

\subsection{Holomorphs}
Riordan groups are constructed using a special case of a
general procedure.  The simplest case of this procedure
is the construction of the \text{holomorph} of a group:
$$ G \mapsto \Hol(G):= G\ltimes \Aut(G), $$
where $\Aut(G)$ is the group of group automorphisms
of $G$.
This semidirect product 
is the set $G\times\Aut(G)$, equipped with the
multiplication defined by
$$(f,a)(g,b):= (f a(g), a\circ b). $$

More generally, we have this:

For any category $C$, and object $A\in C$, we have
the group $\Aut_C(A)$, consisting of the invertible
arrows (or $C$-morphisms) from $A$ to $A$.

\begin{definition} 
Let $C$ be a category 
equipped with a functor 
$A\mapsto A^\times$ into the category of groups.
	For $A\in C$, the \emph{$C$-holomorph} of $A$ with respect to $^\times$ is
	the group
	$$ \Hol_C(A,^\times):=  A^\times \ltimes \Aut_C(A). $$
\end{definition}

\subsection{Example}	If $C$ is 
	the category of rings with identity
	equipped with the usual functor
	that associates to 
	the ring $R$ the group $R^\times$ of units
	of $R$, then $\Hol_C(R,^\times)$ is the subgroup
of $\Hol(R^\times)$ consisting of those
$(f,a)\in R^\times\times\Aut(R^\times)$
such that $a$ extends to a ring-automorphism of $R$.

\newcommand\Homeo{\textup{Homeo}}
\subsection{Example} If $C$ is the category of topological spaces
and continuous functions, equipped with the 
homotopy functor $X^\times:=\pi_1(X)$, then
$\Aut_C(X)$ is the group $\Homeo(X)$ of bicontinuous
bijections of $X$, and $\Hol_C(X,^\times)$ is the semidirect
product
$$ \pi_1(X) \ltimes \Homeo(X). $$

\subsection{Example} The original holomorph, defined
on the category of groups
is the special case
$$ \Hol(G) = \Hol_\textup{Group}(G,\times) $$
when $\times$ is the identity map on the category, i.e.  $G^\times=G$
for each group $G$.

\ignore{
	$\Hol_C$ is not, in general, a functor from $C$ to the category
	of groups. However, if we define
	$C^\iso$ to be the category having the same objects
	as $C$ but only $C$-invertible arrows $A\to B$
	as arrows, then \\
	(1) $C\mapsto C^\iso$ is a functor on the category
	of categories,\\
	(2) the identity map $C \to C$ is a functor
	from $C^\iso$ to $C$,
	and\\
	(3) the restriction $\Hol_{C^\iso}$ of $\Hol_C$ to
	$C^\iso$ is a functor from $C^\iso$ to the category of groups.
	}

\subsection{The Riordan group}
Fix $K$, an integral domain (commutative, with identity), and consider the
category $\Kalg$ of $K$-algebras with $K$-algebra homomorphisms.
The formal power series rings $\F_d:= K[[x_1,\cdots,x_d]]$ in
$d$ commuting variables are objects of this category.  We refer to
\cite[Section 1]{OF-CFM} for basic notation and basic properties of 
these algebras of power series.  We normally write
a series $f\in\F$ as
$$ f = \sum_{m\in S} f_m m, $$
where $S$ is the semigroup of monic monomials in
$d$ variables. Multiplication in the algebra $\F_d$
is convolution:
$$ (fg)_m = \sum_{p|m} f_p g_{m/p}, $$
where $p|m$ means that the monomial $p$ is a factor of
the monomial $m$. The algebra $\F_d$ is a 
commutative integral domain with identity, and
the multiplicative group
$\F^\times_d$ consists of those series
$f$ having `constant term' $f_1\in\K^\times$.

Note that a $K$-algebra automorphism of $\F$ 
preserves the identity series.

\begin{definition}
	For $d\in\N$, we define \emph{the Riordan group
	in $d$ dimensions over $K$} to be
	$$\RI(K):= \RI_d(K):= \Hol_{\Kalg}(\F_d,^\times). $$
\end{definition}

In \cite{OF-CFM} we described the group
$\G=\G_d$ of formal maps of $K^d$ that fix $0$,
with the operation of formal composition.
We denote its identity by $\ONE$.

For $g\in\G$, we define the \emph{composition
map} $C_g:\F\to\F$ by
$$  
C_g(f):= f\circ g,\ \forall f\in\F. $$
The connection to $\RI_d(K)$ arises from the 
following well-known fact:

\begin{theorem}\label{T:Aut-F}
	The map $\phi:g\mapsto C_g$
	is a group isomorphism from
	$\G_d^{\op}$ onto $\Aut_\Kalg(\F_d)$.
\end{theorem}
\begin{proof}
	This is straightforward. We have
	$$ \phi(g\circ h)(f) = f\circ g\circ h = (\phi(h)\circ\phi(g))(f),$$
	whenever $g,h\in\G$ and $f\in\F$, so $\phi$ is a homomorphism
	from $\G^\op\to \Aut_\Kalg(\F)$.
	If $\phi(g)=\phi(h)$, then for $1\le j\le d$ 
	we have
	$$ g_j = \phi(g)(x_j) = \phi(h)(x_j) = h_j, $$
	so the $j$-th components of the maps $g$ and $h$ coicide.
	Thus $\phi$ is injective.
	Finally, given any $K$-algebra automorphism
	$\alpha$ of $\F$, define
	$$ \psi(\alpha):= (\alpha(x_1),\ldots,\alpha(x_d))\in\G.$$
	Then for each $f\in\F$, we claim that 
	$$ \phi(\psi(\alpha))(f)
	= f\circ \psi(\alpha)
	=\alpha(f).$$
	This tells us that $\phi$ is surjective.
To see the claim, the only point that needs explanation
	is the second equation.  Fix $f\in\F$. 
	For $k\in\N$, we can write
	$f=h + r$, where $h\in K[x_1,\ldots,x_d]$ 
	is a polynomial of degree at most $k$
	and $r\in\M^{k+1}$, the $(k+1)$-st power of the
maximal ideal $\M=x_1\F+\cdots+x_d\F$ of $\F$.
	Since the automorphism $\alpha$ maps $\M^{k+1}$
	into itself, we have
	$$\alpha(f)-\alpha(h)=\alpha(r) \in\M^{k+1}. $$
	Also $\alpha(h)=h\circ\psi(\alpha)$, and
	$$ f\circ\psi(\alpha) - h\circ\psi(\alpha) 
	= r\circ\psi(\alpha)\in \M^{k+1}.$$
	Thus
	$$ f\circ \psi(\alpha)
	-\alpha(f) \in \bigcap_{k=1}^\infty \M^{k+1}=(0), $$
	and the claim holds.
\end{proof}

In the case $d=1$, where $\F:=K[[x]]$ is the algebra
of all formal power series in a single indeterminate $x$
having coefficients in $K$, the abelian group
$\F^\times$ is the set of series of the form
$$ f=f_1+f_xx+ f_{x^2}x^2+\HOT $$
with $f_1\in K^\times$. (We
use $\HOT$ to stand for 
\lq\lq higher-order terms''. In this case $\HOT$ stands for
$\sum_{n=3}^\infty f_{x^n}x^n$.)
For this case, the theorem says that the group
$\Aut_\Kalg(\F)$ is isomorphic to  the group $(\G,\circ^\op)$ 
of formally-invertible 
power series of the form
$$ g= g_xx+ g_{x^2}x^2+\HOT $$
with 
$g_x\in K^\times$, with the action given by
$$ 
\begin{array}{rcl}
	g(f) &=&
	f\circ g = f_1 +f_x g(x) + f_{x^2}g(x)^2 +\HOT
	\\
	&=& f_1 + (f_xg_x)x + (f_xg_{x^2}+f_{x^2}g_x^2)x^2+\HOT.
	\\
\end{array}
	$$
and the group multiplication given by
$$ g\circ^\op g' = g'\circ g. $$

\begin{corollary}
Let $d\in\N$ and $K$ be an integral domain with identity.
	Then the  $d$-dimensional Riordan group $\RI_d(K)$ 
	is isomorphic to the semidirect product
$$ \F_d^\times \ltimes \G_d^{\op}, $$
with the multiplication
$$ (f,g)(f',g') := (f\cdot (f'\circ g), g'\circ g), $$
for $f,f'\in\F_d$ and $g,g'\in\G_d$.
\end{corollary}
The original Riordan group corresponds to the case $d=1$ and
$K=\Z$.

Throughout the rest of this paper we identify $\RI_d(K)$ and 
$ \F_d^\times \ltimes \G_d^{\op} $, and
represent elements of 
$\RI_d(K)$ as pairs $(f,g)\subset\F_d^\times\times\G_d$. 

We define
$$ \LA:= \{(1,g): g\in\Aut(\F) \},\quad \AP:= \{(f,\ONE): f\in\F^\times
\}.$$
These are subgroups of $\RI$, and in analogy with the case $d=1$
we call $\LA$ the \emph{Lagrange subgroup} and $\AP$
the \emph{Appell subgroup} (cf. \cite{Barry}).  Clearly, $\LA$ is isomorphic to
$\G^{\textup{op}}$ and $\AP$ is isomorphic to $\F^\times$.

Many researchers working on one-dimensional Riordan groups over $\Z$ or $\C$ 
confine attention to $(f,g)\in\F_1^\times\times\G_1$
such that $f_1=1$ and $g=x+\HOT$. In this, they follow the lead of Shapiro
\cite{Shapiro}. So for general $d$ and $K$ we define 
\emph{the Shapiro subgroup} to be
$$ \SH:= \{(f,g)\in \RI: f_1=1 \textup{ and } g \in\TI)\}, $$
where
$$ \TI:= \{g\in \G: g=\ONE+\HOT\} $$
is the subroup of $\G$ consisting of the formal maps
\emph{tangent to the identity}, i.e. maps in the kernel
of the homomorphism $L:\G\to\GL(K,d)$ that takes 
a map $G$ to the linear term $L_1$ in its expansion
$$ g = \sum_{k=1}^\infty L_k  $$
in homogeneous polynomials of degree $k=1$,$2$,$\ldots$.

\section{Indexed matrices}
The original description of Riordan arrays, ably described in 
the undergraduate textbook \cite[Chapter 1]{Barry}, is neither
in terms of $\Hol_\Kalg(\F,^\times)$ nor $\F^\times\ltimes\G^\op$.
The arrays are lower-triangular infinite matrices
with integral entries, in which each row is derived
from the previous one by an unchanging rule that involves
linear combinations.  The other descriptions exploit
Jabotinski's idea, compressing the information in
the array into a pair of formal power series.
Well-known examples include (1) the 
array whose rows
are those of Pascal's triangle, corresponding to the
series of $\frac1{1-x}\in\F_1^\times$ and 
$\frac{x}{1-x}\in\G_1$, and (2) the 
Riordan-group inverse to the array corresponding to the 
simple finite series $1-x\in\F$ and $x-x^2\in\G$,
corresponding to 
$$ 
\frac{1-\sqrt{1-4x}}{2x}\in\F^\times
\textup{ and }
\frac{1-\sqrt{1-4x}}{2}\in\G,
$$
whose first column lists
the Catalan numbers (first named by Riordan). 

We proceed to discuss the way to extend the matrix representation
of the one-dimensional Riordan groups to dimensions $d>1$.

\subsection{Matrix structure}
Let $K$ be an integral domain with identity, and $n\in\N$. We are used to
$\gl(n,K)$, the usual $K$-algebra (i.e. ring and $K$-module)
of $n\times n$ 
matrices over $K$. The typical entry in a matrix $(a_{ij})\in\gl(n,K)$
is indexed by two natural numbers $i$ and $j$ in the set
$\{1,2,\ldots,n\}$.  We tend to think of a matrix as an 
structured object, whose structure is derived from the
ordered structure of $\N$.  For some matrix applications
the arithmetical structure of $\N$ is significant; examples
are those using Hankel and Toeplitz matrices.  For others, the order
structure of $\N$ is significant; examples are those using upper-triangular
matrices. But sometimes neither the arithmetical or ordered
structure of $\N$ is important; the natural numbers are just used
as labels.  
 
Letting $S=\{1,\ldots,n\}$, the set $K^S$
is a $K$-module, and the set $\End_K(K^S)$
of all $K$-module endomorphisms of $K^S$
has a natural $K$-algebra structure,
where the addition and scalar multiplication
are pointwise and the multiplication
$$ \End(K^S) \times \End(K^S) \to \End(K^S) $$
is composition.  Let $e_j(i):= 1$ if $i=j$ and $0$ otherwise.
The map that sends the endomorphism
$\phi\in \End(K^S)$ to the matrix 
$(a_{ij})\in \gl(n,K)$ defined by
\begin{equation}\label{E:1}
 a_{ij}:= \phi(e_j)(i),\ \forall i,j\in S
\end{equation}
is bijective, and preserves the $K$-algebra
structures, so this gives another way to view
$\gl(n,K)$.  

But $\End(K^S)$ makes sense, and 
has the same $K$-algebra structure for \emph{arbitrary
sets} $S$, and the same formula  (\ref{E:1})
defines an object that we can think of as
a matrix, i.e. a two-dimensional array, in which
the rows and columns are indexed by elements
of $S$.  Let us denote the set of all these
matrices by $\gl'(S,K)$. 
If $S$ is infinite,
it is no longer true that the map from endomorphisms
to matrices is injective, because the matrix
only depends
on the values of the endomorphism
on the linear
span of the $e_j$, a proper submodule $F$ of $K^S$.
When the algebraic operations 
of pointwise addition and scalar multiplication on
$\End(K^S)$ are transferred to $\gl'(S,K)$,
the formulas look familiar:
$$\begin{array}{rcl}
(a_{ij}) + (b_{ij})&:=& (a_{ij}+b_{ij}),\\
c\cdot(a_{ij}) &:=& (ca_{ij}),
\end{array}
$$
However, since an endomorphism $\phi$ may map
an $e_j$ outside the subspace $F$, there is not
enough information in the matrix associated to
$\phi$ to allow us to calculate the matrix
associated to $\psi\circ\phi$, for another
endomorphism $\psi$.  We can overcome this by
restricting attention to $\End_K(F)$, and then
composition transfers to the familiar-looking rule: 
$$\begin{array}{rcl}
(a_{ij}) \times (b_{ij})&:=& (c_{ij}),\\
	\textup{where }c_{ij} &=& \sum_{k\in S} a_{ik}b_{kj}. 
\end{array}
$$
This rule actually gives a well-defined product on 
matrices such that, for each $i\in S$, the
entry $a_{ij}$ is zero for all but a
finite number of $j\in S$, and we denote
this set of matrices by $\gl(S,K)$. (Note that 
this set is somewhat larger than the image of
$F$.)  With these operations, this
set is a $K$-algebra, for arbitrary sets $S$.

This simple idea is useful, because
it is sometimes more natural to index the rows and columns of a matrix
by elements of a particular finite or infinite set,
and it may be artificial (or impossible)
to label these using natural numbers.
For our immediate purposes, the natural
index set that presents itself is the set
$S$ of monic monomials in $d$ variables \cite{OF-CFM},
because the semigroup structure of that $S$ is relevant
to our purpose.

$\gl(S,K)$ is a semigroup with identity under multiplication; the
identity matrix $I$ has $I_{ii}=1$ and $I_{ij}=0$ when $i\not=j$.
We denote the group of invertible elements by $\GL(S,K)$.

\ignore{
\section{The Riordan Group}
\subsection{Monic monomials}\label{SS:S}
By a \emph{monic monomial in $d$ variables} we mean an element
$x^i:= x_1^{i_1}\cdots x_d^{i_d}$, where $x=(x_1,\ldots x_d)$
and $i\in\Z_+^d$ is a multi-index with nonnegative entries.
The \emph{degree} of the monic monomial $x^i$ is $|i|:= i_1+\cdots+i_d$.  
Let $S=S_d$ denote the set of all monic monomials in $d$ variables. 
So $S$
has elements $1$,$x_1$,$\ldots$,$x_d$,$x_1^2$,$x_1x_2$,$\ldots$,
$x_1^3$,$x_1x_2^2$, and so on.  
If $m=x^i$, then we denote $x_j^{i_j}$ by $m_j$. For instance,
$(x_1x_2^2x_3^3)_2=x_2^2$.

The set $S$ has the structure
of a commutative semigroup with identity, where the product
is defined by $x^i\cdot x^j:= x^{i+j}$.  The semigroup
$S$ has cancellation, and if $m$ and $n$ belong to $S$,
then we call $m$ \emph{a factor} of  $p=m\cdot n$, we write $m|p$,
and write $p/m=n$. Each nonempty subset $A\subset S$ has a highest
common factor, which we denote by $\hcf(A)$.

\subsection{Formal power series}
Let $K$ be an integral domain with identity, and $d\in\N$.
We form the $K$-algebra $\F:=\F_d := K[[x_1,\ldots,x_d]]$
of all formal power series in $d$ variables, with coefficients in
$K$.   
An element $f\in\F$ is a formal sum
$$ f = \sum_{m\in S} f_m m, $$
where $f_m\in K$ for each $m\in S$.
Addition is done term-by-term, as is scalar multiplication,
and multiplication by summing all the coefficients
of terms from the factors corresponding to monomials
with the same product. More precisely, 
$$ (f\cdot g)_m = \sum_{p\in S} \sum_{q\in S, pq=m} f_pg_q, \forall m\in S.$$
Equivalently,
$$ (f\cdot g)_m = \sum_{p\in S, p|m} f_p g_{m/p}. $$

The formal series 
$1$, which has $1_1=1$ and $1_m=0$
for all other monic monomials $m$, is the multiplicative identity
of $\F$.  

In the remainder of this section, all summations will be over
indices drawn from $S$, so the formula for the product becomes just
$$ (f\cdot g)_m = \sum_{p|m} f_p g_{m/p}. $$

The map $f\mapsto f_1$ is a surjective $K$-algebra homomomorphism
from $\F$ onto $K$.

For $f=\sum_m f_mm \in\F$, we set
$$  \spt(f): = \{m\in S: f_m\not=0\}. $$
If $f\not=0$, then $\spt(f)$ is nonempty, and  
we define the \emph{vertex of $f$} to be the monic monomial
$$ v(f):= \hcf( \spt(f) ). $$ 
It is easy to see that
$
	v(f)v(f') \le v(ff'), \ \forall f,f'\in\F.
$
In fact, equality holds, as we shall see shortly. 
\begin{proposition}\label{P:v(f)-factor}
If $f\in\F$ is nonzero, then there exists a nonzero $h\in\F$
with $v(h)=1$ and $f=v(f)h$.
\end{proposition}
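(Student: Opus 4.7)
The plan is to exhibit $h$ explicitly by dividing each term of $f$ by the common factor $v(f)$, then verify both that $f = v(f)h$ and that $v(h) = 1$.

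First I would set $v := v(f) = \hcf(\spt(f))$. By the definition of highest common factor in the semigroup $S$, the monomial $v$ divides every $m \in \spt(f)$, so $m/v \in S$ is well defined for each such $m$. I would then define
$$ h := \sum_{m \in \spt(f)} f_m \cdot (m/v), $$
extended by zero coefficients elsewhere, so $h \in \F$. The identification of coefficients gives $h_n = f_{vn}$ for $n \in S$ (with the convention $f_{vn}=0$ if $vn \notin \spt(f)$).

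Next I would check $f = v \cdot h$. Using the product formula specialized to the case where one factor is the monomial $v$ (so $v_p = 1$ when $p = v$ and $0$ otherwise), one gets $(v h)_m = h_{m/v}$ if $v \mid m$ and $0$ otherwise. Thus $(vh)_m = f_m$ whenever $v \mid m$, and if $v \nmid m$ then $m \notin \spt(f)$ (since $v$ divides every element of $\spt(f)$), so $(vh)_m = 0 = f_m$. Hence $f = vh$.

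Finally I would compute $v(h)$. Note $\spt(h) = \{m/v : m \in \spt(f)\}$. The key algebraic fact is that for any nonempty subset $A \subset S$ all of whose elements are divisible by a fixed $u \in S$, one has $\hcf(A/u) = \hcf(A)/u$; this is immediate from the identification of $S$ with $(\Z_+^d, +)$, under which $\hcf$ becomes coordinate-wise minimum and division by $u$ becomes subtraction of the exponent vector of $u$, and minimum commutes with such a translation. Applying this with $A = \spt(f)$ and $u = v$ yields $v(h) = \hcf(\spt(f))/v = v/v = 1$, completing the proof.

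The only real step that needs any thought is the last identity $\hcf(A)/u = \hcf(A/u)$; everything else is bookkeeping. I would either prove that lemma inline in one line (via the coordinate-wise min interpretation) or, if preferred, record it as a small preliminary remark in Subsection~2.1.
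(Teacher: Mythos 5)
Your proposal is correct and follows essentially the same route as the paper: the same explicit $h=\sum_{m\in\spt(f)}f_m\cdot(m/v(f))$, with the identity $f=v(f)h$ checked directly. The only cosmetic difference is in justifying $v(h)=1$: the paper notes that for each coordinate $j$ the minimal exponent is attained by some $p\in\spt(f)$, while you phrase the same coordinate-wise-minimum observation as the lemma $\hcf(A/u)=\hcf(A)/u$; both are fine.
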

\begin{proof}
$v(f)$ is a factor of each $m\in\spt(f)$,
so we can define 
$$ h:= \sum_{m, f_m\not=0} f_m\cdot (m/v(f)) $$
and we have  
an $h\in\F$ and $f=v(f)h$.
The fact that $v(f)=\hcf(\spt(f)$ implies that
for each $j\in\{1,\ldots,d\}$ there exists
$p\in\spt(f)$ with $f_p\not=0$ and $p_j=v(f)_j$.
Thus $q:=p/v(f)$ belongs to $\spt(h)$ and has $q_j=1$
(i.e. $q$ `does not involve $x_j$'). Thus
$v(h)=1$.
\end{proof}

If $f=ph$ with $p\in S$ and $h\in\F$, 
then we write $h=f/p$. Note that $h$
is uniquely determined by $f$ and $p$,
because $h_m = f_{m/p}$ whenever
$h_m\not=0$.

\begin{proposition}
$\F$ is an integral domain with identity.
\end{proposition}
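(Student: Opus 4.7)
The plan is to verify the ring axioms first and then establish the absence of zero divisors. The ring structure is routine: the product formula $(f\cdot g)_m = \sum_{p\mid m} f_p g_{m/p}$ is well-defined because each $m\in S$ has only finitely many factors (namely $\prod_j(i_j+1)$ if $m = \prod_j x_j^{i_j}$), the ring axioms follow by manipulating these finite sums, and the element $1$ already introduced is the multiplicative identity. So the content of the proposition is really the no-zero-divisors property.

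For that, the plan is to fix any total well-order $\prec$ on $S$ that is compatible with multiplication, in the sense that $m \prec n$ if and only if $mp \prec np$ for all $p\in S$. The graded lexicographic order is a standard example; it is a well-order because for any nonempty $T\subset S$ the set of total degrees appearing in $T$ has a least element $k$, and the monomials in $T$ of total degree $k$ form a finite set with a lex-minimum. For nonzero $h\in\F$, let $\mu(h)$ denote the $\prec$-minimum of $\spt(h)$. The key step is the identity
\[
(fg)_{\mu(f)\mu(g)} \;=\; f_{\mu(f)}\cdot g_{\mu(g)},
\]
whose right-hand side is nonzero because $K$ is a domain, thereby exhibiting a nonzero coefficient of $fg$ whenever $f$ and $g$ are nonzero.

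To verify this identity, expand the left side as $\sum_{p\mid m} f_p g_{m/p}$ with $m=\mu(f)\mu(g)$. A summand can be nonzero only if $f_p\neq 0$ and $g_{m/p}\neq 0$, which force $p\succeq \mu(f)$ and $m/p\succeq \mu(g)$ respectively. Multiplying the second inequality by $p$ using compatibility gives $m\succeq p\mu(g)$, i.e.\ $\mu(f)\mu(g)\succeq p\mu(g)$, and then cancelling $\mu(g)$ by compatibility yields $\mu(f)\succeq p$. Hence $p=\mu(f)$ and the sum collapses to the single claimed term. The main obstacle is simply pinning down a monomial order with the right well-ordering and cancellation properties; once graded lex is in hand the argument is mechanical. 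An alternative route would pass to the lowest-total-degree homogeneous component of each series and reduce to the fact that $K[x_1,\ldots,x_d]$ is a domain, proved by induction on $d$, but the monomial-order approach is shorter and avoids the detour through polynomial rings.
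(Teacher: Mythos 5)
Your proof is correct, but it takes a genuinely different route from the paper's. You fix a multiplicative well-order on $S$ (graded lex), let $\mu(h)$ be the least element of $\spt(h)$, and prove the leading-coefficient identity $(fg)_{\mu(f)\mu(g)}=f_{\mu(f)}\,g_{\mu(g)}$; your collapse of the convolution sum to a single term is argued correctly, using compatibility in both directions (to multiply by $p$ and then to cancel $\mu(g)$), and the well-ordering of graded lex is what guarantees $\mu$ exists even for infinite supports, which you rightly flag as the one point needing care. The paper instead proceeds by induction on the number of variables: the base case $d=1$ is essentially your argument in one variable (where the order is just the degree), and the inductive step normalises $f$ and $f'$ by their vertices so that $v(f)=v(f')=1$, then restricts to the terms not involving $x_{d+1}$ to obtain nonzero $h,h'\in\F_d$ with $hh'=0$, contradicting the inductive hypothesis. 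Your approach handles all $d$ at once and is the standard monomial-order argument, at the cost of introducing and verifying an auxiliary total order that the paper never needs elsewhere; the paper's approach stays entirely within the vertex machinery $v(f)=\hcf(\spt(f))$ that it has already set up in Proposition \ref{P:v(f)-factor} and reuses later for the Verde-Star algebra. Both are complete proofs, and your decision to treat the ring axioms and the identity element as routine matches the paper, which likewise addresses only the absence of zero divisors.
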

\begin{proof}
We use induction on $d$.

When $d=1$, the result follows from
the familiar rule  $v(ff')=v(f)v(f')$ 
(i.e. the index of the lowest-order nonzero
term in $f(x)f'(x)$ is the sum of the
indices of the lowest-order terms in
$f(x)$ and $f'(x)$).

For the induction step,
	suppose $\F_d$ is an integral domain, and
$\F_{d+1}$ is not.

        Choose two nonzero $f,f'\in F_{d+1}$,
with $ff'=0$.
        Replacing $f$ by $f/v(f)$
and $f'$ by $f'/v(f')$, we may assume
that $v(f)=1=v(f')$.  We may regard
monomials in $d$ variables as monomials
in $d+1$ variables that do not involve
$x_{d+1}$, and define
$$ h:= \sum_{m\in S_d} f_mm,\quad
h':= \sum_{m\in S_d} f'_mm.$$
Then
$$ hh' = \sum_{m\in S_d} (ff')_mm,$$
the sum of all the terms in $ff'$ that do not
involve $x_{d+1}$. Thus $hh'=0$, so by
hypothesis $h=0$ or $h'=0$.
        But $h=0$ means $x_{d+1}|v(f)$,
        contradicting $v(f)=1$.  Similarly,
        $h'=0$ is impossible.
\end{proof}

We denote by $\M$ the ideal 
$$ \F x_1+\cdots+\F x_d 
=\{f\in\F: f_1=0\}. $$
(This ideal is maximal if and only if $K$ is a field.
In that case, $\F$ is the disjoint union
of $\F^\times$ and $\M. If $K$
is not a field, then $\F$ is a local ring
if and only if $K$ is local.  It is in any case
a local $K$-algebra, in the sense that $\M$
is a maximal $K$=algebra-ideal.)

\begin{proposition}\label{P:F^times}
$$ \F^\times =\{ f\in\F: f_1\in K^\times\},$$
so the map $f\mapsto f_1$ is a surjective group homomorphism
from $\F^\times\to K^\times$. 
\end{proposition}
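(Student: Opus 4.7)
The forward inclusion is immediate from the observation, just noted above the statement, that $f\mapsto f_1$ is a $K$-algebra homomorphism $\F\to K$: if $fg=1$ in $\F$, then $f_1g_1=1$ in $K$, so $f_1\in K^\times$.

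For the reverse inclusion, my plan is to construct $f^{-1}$ explicitly by a formal geometric series. Suppose $f\in\F$ with $f_1\in K^\times$, and set $u:=1-f_1^{-1}f$. Then $u_1=0$, so $u\in\M$. The key observation is that every monomial in $\spt(u)$ has positive total degree, so by an easy induction every monomial in $\spt(u^k)$ has total degree at least $k$. Consequently, for each fixed $m=x^i\in S$, only the terms with $k\le|i|$ can contribute to the coefficient of $m$ in $\sum_{k\ge 0}u^k$. This shows that $g:=f_1^{-1}\sum_{k\ge 0}u^k$ is a well-defined element of $\F$.

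The telescoping identity $(1-u)\sum_{k=0}^Nu^k=1-u^{N+1}$ holds in $\F$ for every $N$, and for any given $m\in S$ one may choose $N\ge|i|$ so that $(u^{N+1})_m=0$. Hence $(1-u)\cdot f_1 g$ and $1$ have the same coefficient at every $m$, i.e. $fg=1$, so $f\in\F^\times$. Finally the asserted surjectivity of $f\mapsto f_1$ as a map $\F^\times\to K^\times$ is trivial, since any $c\in K^\times$, regarded as the constant series in $\F$, lies in $\F^\times$ and satisfies $c_1=c$.

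The only nontrivial point I anticipate is the well-definedness of the geometric series. The correct finiteness measure here is the total degree of a monomial, rather than the single order of vanishing that suffices in the one-variable case used implicitly in the preceding integral-domain proof. Once this degree-based coefficient-by-coefficient convergence is justified, everything else is routine formal manipulation.
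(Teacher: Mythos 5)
Your proof is correct and follows essentially the same route as the paper: reduce to $1-u$ with $u\in\M$ and invert by the formal geometric series $\sum_{k\ge 0}u^k$, checking convergence coefficient by coefficient. If anything, your degree-based justification (every monomial of $\spt(u^k)$ has total degree at least $k$) is stated more carefully than the paper's appeal to ``$v(h^r)$ has order at least $r$,'' since in several variables the vertex $v(h^r)$ can have degree $0$ even for $h\in\M$ (e.g.\ $h=x_1+x_2$), so total degree is indeed the right finiteness measure.
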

\begin{proof}
Let $f\in\F$ have $f_1\in K^\times$. Take
$\alpha:= (f_1)^{-1}\in K$.  Then
$\alpha f = 1 + h$, with $h\in\M$, and we may define
$$ k:= 1 - h + h^2 - h^3 +\cdots \in\F, $$
where the sum makes sense because $v(h^r)$ has order
at least $r$ for each $r\in\N$. Then
$r\alpha f=1$, so $f\in\F^\times$. 
This proves that
$$ \{ f\in\F: f_1\in K^\times\}\subset \F^\times.$$

The opposite inclusion is clear,
because if $f\in\F^\times$, and $h=f^{-1}$,
then 
$$ 1 = (fh)_1 = f_1h_1, $$
so $f_1\in K^\times$.
\end{proof}

\subsection{Example}
The (finite) series $f:=1-x_1-x_2-x_3$ belongs to
$\F^\times$.  Its reciprocal series has all the trinomial
coefficients as its coefficients.

\subsection{Formal maps}
By $\M^d$ we denote (as usual) the Cartesian product
$\M\times\cdots\times \M$ of $d$ factors $\M$,
so an element $f\in\M^d$ is a $d$-tuple $(f_1,\ldots,f_d)$,
with each $f_j\in\M$.

The formal composition $f\circ g$ is defined for $f\in\F$ and
$g\in\M^d$, as follows. First, the composition $m\circ g$ of a 
monomial $m=x^i$ with $g$ is $g_1^{i_1}\cdots g_d^{i_d}$,
where the products and powers use the multiplication
of the ring $\F$. Then 
$$ f\circ g:= \sum_m f_m\cdot (m\circ g). $$
The sum makes sense because for a given monomial $p\in S$,
the coefficient of $p$ in $m\circ g$ is zero except
for a finite number of $m\in S$; in fact it is zero
once the degree of $m$ exceeds the degree of $p$.
Thus the value 
$$ (f\circ g)_p = \sum_{m} f_m\cdot (m\circ g)_p $$
is a finite sum in the ring $K$, and makes sense.

We think of elements of $\M^d$ as \emph{formal maps 
of $K^d$ fixing $0$}.
The formal composition $f\circ g$ is defined for
$f\in\M^d$ and $g\in\M^d$ by
$$ f\circ g := (f_1\circ g,\ldots,f_d\circ g). $$
With this operation, $\M^d$ becomes a semigroup
with identity; the identity is the element
$$\ONE:=(x_1,x_2,\ldots,x_d).$$
We denote the group of invertible elements
of this semigroup by $\G$.

For $g\in\M^d$, we define \emph{the linear part
of $g$} to be the element of $\gl(d,K)$ with
$(i,j)$ entry given by
$$ L(g)_{ij}:= (g_i)_{x_j}, $$
i.e. the coefficient of the first-degree monomial $x_j$
in the $i$-th component $g_i$ of $g$.

\begin{proposition}
Let $g\in\M^d$. Then $g\in G$ if and only if
$L(g)\in\GL(d,K)$.
\end{proposition}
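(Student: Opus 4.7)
The plan is to prove the two directions of the equivalence in turn, both hinging on the multiplicative identity $L(g\circ h)=L(g)L(h)$ for $g,h\in\M^d$. This identity follows by direct expansion: the coefficient of $x_k$ in $(g\circ h)_i$ can only arise from pairing the linear terms of $g_i$ with the linear terms of the $h_j$, because every higher-order monomial of $g_i$ composed with elements of $\M^d$ produces only terms of degree at least $2$. Since $L(\ONE)=I$, the forward direction is immediate: if $g\in\G$ has inverse $h$, then $L(g)L(h)=L(h)L(g)=I$, so $L(g)\in\GL(d,K)$.

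For the converse, I would first reduce to the case $L(g)=I$. Setting $A:=L(g)\in\GL(d,K)$, let $\lambda\in\M^d$ be the purely linear formal map whose $i$-th component is $\sum_j(A^{-1})_{ij}x_j$. Then $\lambda$ lies in $\G$, with explicit inverse the linear map of matrix $A$, and by the multiplicativity of $L$ one has $L(\lambda\circ g)=A^{-1}A=I$. If $\lambda\circ g$ can be inverted, then $g=\lambda^{-1}\circ(\lambda\circ g)$ is also invertible.

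Assume therefore that $L(g)=I$. The key step is to construct a right inverse $h\in\M^d$ by determining its coefficients one total degree at a time. The forced choice $L(h)=I$ handles degree $1$. For $k\geq 2$, when one expands $(g\circ h)_i$ and reads off the coefficient of a monomial of degree $k$, the contribution from the linear part of $g_i$ is exactly the corresponding degree-$k$ coefficient of $h_i$; any contribution from a degree-$\ell$ monomial of $g_i$ with $\ell\geq 2$ involves $\ell$ factors of components of $h$ whose degrees sum to $k$, so each such factor has degree at most $k-\ell+1<k$. Hence the degree-$k$ coefficients of $h$ are uniquely forced by the lower-degree data, and induction produces an $h$ satisfying $g\circ h=\ONE$.

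Finally, since $L(h)=I$ as well, the same inductive argument produces a right inverse $h'$ for $h$, and associativity of formal composition gives
$$h'=\ONE\circ h'=(g\circ h)\circ h'=g\circ(h\circ h')=g\circ\ONE=g,$$
so that $h\circ g=h\circ h'=\ONE$ and $h$ is a two-sided inverse for $g$ in $\G$. The one genuine obstacle in this plan is justifying the inductive solvability claim cleanly, in particular the bookkeeping showing that only degree-$<k$ data of $h$ enters on the right-hand side at each stage; once that is in hand, the multiplicativity of $L$ and associativity of $\circ$ do the rest.
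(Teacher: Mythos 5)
Your proof is correct, and it follows the paper's overall architecture for the converse (reduce to the case $L(g)=\ONE$ by pre-composing with the linear map of matrix $L(g)^{-1}$, using multiplicativity of $L$), but it diverges at the decisive step. The paper disposes of the case $g'=\ONE+h$, $L(h)=0$, by writing down the explicit series $h':=\ONE-h+h\circ h-h\circ h\circ h+\cdots$ and asserting it is ``straightforward to check'' that this inverts $g'$; you instead construct a right inverse by solving for its coefficients degree by degree, observing that the degree-$k$ part of $h_i$ in the equation $h_i+\sum_{|\alpha|\ge2}c_{i,\alpha}h^{\alpha}=x_i$ is forced by the degree-$<k$ data, and then upgrade the right inverse to a two-sided one by the standard associativity argument. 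Your route is the safer one: the paper's Neumann-type series does not actually invert $\ONE+h$ under composition (already for $d=1$ and $h=x^2$ the series gives $x-x^2+x^4-\cdots$, whereas the true inverse of $x+x^2$ is $x-x^2+2x^3-\cdots$; the correct explicit formula would require the recursion $h'=\ONE-h\circ h'$ or a Lagrange-inversion type expansion, not an alternating sum of iterates). So your inductive bookkeeping, which you flag as the one point needing care, is in fact fully justified by the degree count $k-\ell+1<k$ for $\ell\ge2$, and it buys a proof that does not depend on a closed-form inverse at all. The only minor cosmetic point is that your reduction composes $\lambda$ on the left of $g$ where the paper writes $g=L(g)\circ H\circ g$; these are the same reduction, since in either case one uses $L(\lambda\circ g)=L(\lambda)L(g)=I$.
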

\begin{proof}
If $g$ is invertible in $\M^d$, then its inverse
$h$ has $h\circ g=\ONE$, and this implies
that the matrix product $L(g)L(h)$ is the identity
matrix. Thus $L(g)\in\GL(d,K)$.

For the converse, suppose $L(g)$ is an invertible
matrix, with inverse $H$. We can also regard $L(g)$
as an element of $\G$, by setting
$$ L(g)_i = \sum_{j=1}^d L(g)_{ij} x_j. $$
If we regard $H$ in the same way,
then $H$ is the compositional inverse of $L(g)$, and we can write
$g= L(g)\circ H\circ g$, so it suffices to
show that $H\circ g$ is invertible in $\M^d$.
Now $H\circ g$ has linear part $\ONE$, so we just have to
show that all $g'\in\M^d$ of the form
$$ g' = \ONE + h, $$
where $L(h)=0$, are invertible.  But it is
straightforward to check that such $g'$
are inverted by 
$$ h':= \ONE - h +h\circ h - h\circ h\circ h + h\circ h\circ h\circ h
+\cdots.$$
\end{proof}

We remark that a matrix $T\in\gl(d,K)$ is invertible
in $\gl(d,K)$
if an only if its determinant $\det(T)$ belongs to
$K^\times$.  The condition is necessary because
the map $\det$ sends products in $\gl(d,K)$
to products in $K$, and it is sufficient
because when $\det(T)\in K^\times$ we may use
the usual adjugate-transpose construction to
construct an inverse for $T$.

In order to avoid confusion, we prefer to use the notation
$g\pw{k}$ for the $k$-times repeated composition. Thus
$g\pw{2}= g\circ g$, $g\pw3=g\circ g\circ g$, and so on,
and the formula used in the foregoing proof becomes
$$ (1+h)\pw{-1} = \ONE + \sum_{k=1}^\infty (-1)^k g\pw{k}. $$

}

\section{Matrix Representation}
Let $K$ be an integral domain with
identity, $d\in\N$ and $(f,g)\in\RI=\RI_d(K)$, and let $S$ be the
set of monic monomials in $d$ variables.  Then 
we can define an associated matrix 
$M(f,g)\in \GL(S,K)$ by setting the $(m,n)$ entry equal to
the coefficient of the monomial $m$ in $f\cdot (n\circ g)$.
This means that you take the composition of the monomial
$n\in S$ with the formal map $g\in\G$, getting an element
of $\F$, you then multiply this in $\F$ by the formal
series $f$, and then you take the coefficient of the 
monomial $m\in S$.  Expanding the product, this gives
$$ M(f,g)_{m,n} = \sum_p f_p (p \cdot (n\circ g))_m
=  \sum_{p|m} f_p (n\circ g)_{m/p}. $$

\begin{theorem}\label{T:1}
 The map $M$ is an injective group homomorphism
from the Riordan group $\RI$ into $\GL(S,K)$.  
\end{theorem}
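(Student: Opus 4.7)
The plan is to verify four things in sequence: (i) $M(f,g)$ actually lies in $\gl(S,K)$; (ii) $M$ sends the identity of $\R$ to the identity matrix; (iii) $M$ is multiplicative; and (iv) $M$ is injective. Invertibility of $M(f,g)$ in $\GL(S,K)$ will then be automatic, because $(f,g)$ is invertible in the group $\R$ and $M$ is a homomorphism sending identity to identity.

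For (i), I would note that for $g\in\G$ each component $g_j$ lies in $\M$, so $n\circ g$ has no terms of degree less than $|n|$. Hence $f\cdot(n\circ g)$ has no terms of degree less than $|n|$, so $M(f,g)_{m,n}=0$ whenever $|n|>|m|$. For a fixed $m$, only finitely many monic monomials $n$ have degree at most $|m|$, so the row $m$ has finite support and $M(f,g)\in\gl(S,K)$. For (ii), a direct calculation gives $M(1,\ONE)_{m,n}=$ coefficient of $m$ in $1\cdot(n\circ\ONE)=n$, which is $\delta_{m,n}$.

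The main step is (iii). I would fix $(f,g),(f',g')\in\R$ and compute
$$(M(f,g)M(f',g'))_{m,n}=\sum_{k\in S} M(f,g)_{m,k}\,M(f',g')_{k,n}.$$
Writing $f'\cdot(n\circ g')=\sum_k c_k\,k$ with $c_k:=M(f',g')_{k,n}$, the inner sum becomes the coefficient of $m$ in $f\cdot\sum_k c_k(k\circ g)$. The sum over $k$ is finite for each coefficient because only finitely many $k$ have $|k|\le|m|$, so the interchange is legitimate. Three standard properties of formal composition then do the work: linearity, $\sum_k c_k(k\circ g)=(\sum_k c_kk)\circ g$; multiplicativity, $(h_1h_2)\circ g=(h_1\circ g)(h_2\circ g)$; and associativity, $(n\circ g')\circ g=n\circ(g'\circ g)$. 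Applying these in order converts the expression into the coefficient of $m$ in $f\cdot(f'\circ g)\cdot\bigl(n\circ(g'\circ g)\bigr)$, which is exactly $M\bigl(f\cdot(f'\circ g),\,g'\circ g\bigr)_{m,n}=M((f,g)(f',g'))_{m,n}$. This is the step I expect to require the most care, since the three algebraic properties of composition, and the justification that the double sums may be rearranged, must be verified at the level of coefficients, using only degree bounds to keep everything finite.

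For (iv), suppose $M(f,g)=I$. Reading the column indexed by $n=1$ gives $f_m=M(f,g)_{m,1}=\delta_{m,1}$, so $f=1$. Then reading the column indexed by $n=x_j$ gives $(g_j)_m=M(1,g)_{m,x_j}=\delta_{m,x_j}$, so $g_j=x_j$ for each $j$, i.e.\ $g=\ONE$. Hence $\ker M$ is trivial and $M$ is injective. Combining (ii), (iii) and the fact that $\R$ is a group, $M(f,g)\cdot M((f,g)^{-1})=M(1,\ONE)=I$, so $M(f,g)$ is invertible in $\gl(S,K)$ and the image lies in $\GL(S,K)$, completing the proof.
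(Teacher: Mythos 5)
Your proposal is correct, but it is organized differently from the paper's proof. The paper establishes multiplicativity by factoring each Riordan element through the Appell and Lagrange subgroups, writing $(f,g)=(f,\ONE)(1,g)$, and then verifying Equation~(\ref{E:2}) in three special cases (Appell times general, general times Lagrange, Lagrange times Appell) by direct coefficient manipulation, before assembling the general case from these pieces. You instead perform a single direct computation of $(M(f,g)M(f',g'))_{m,n}$, resting the whole argument on three properties of formal composition: that $C_g\colon h\mapsto h\circ g$ respects the locally finite sums involved, that it is multiplicative, $(h_1h_2)\circ g=(h_1\circ g)(h_2\circ g)$, and that composition is associative, $(n\circ g')\circ g=n\circ(g'\circ g)$. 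You are right to flag these as the point requiring care; note, though, that the paper relies on exactly the same identities in disguised coefficient form (e.g.\ in Case~2 it identifies an inner sum with the coefficient of $q$ in $(n\circ g')\circ g$ and then silently rewrites this as $(n\circ g'\circ g)_q$, and in Case~3 it uses $(q\circ g)\cdot(n\circ g)=(qn)\circ g$), so your packaging is not less rigorous, merely more explicit about the fact that $C_g$ is a $K$-algebra endomorphism. What your route buys is a single uniform calculation and the added checks, absent from the paper, that $M(f,g)$ is row-finite and that $M(1,\ONE)=I$; what the paper's route buys is that each special case involves only one nontrivial rearrangement at a time. Your injectivity argument (read off the columns indexed by $1$ and by $x_j$, phrased as triviality of the kernel) and your derivation of invertibility from the homomorphism property coincide with the paper's. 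One further merit of your version worth noting: since you nowhere use invertibility of $f$ or of $g$ in step~(iii), your computation already yields Theorem~\ref{T:2} directly, which the paper obtains by inspecting its own proof after the fact.
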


\begin{proof}
First, we show that 
\begin{equation}\label{E:2}
 M( (f,g)(f',g') ) = M(f,g) M(f',g') 
\end{equation}
whenever $f,f'\in\F^\times$ and $g,g'\in\G$. 

Let $(f,g)$ and $(f',g')$ belong to $\RI$. 
Fix $m,n\in S$.  Then the $(m,n)$ entry in
$$ M( (f,g)(f',g') ) = M( f\cdot (f'\circ g), g'\circ g) $$
is the coefficient of $m$ in
$$ f\cdot (f'\circ g) \cdot n(g'\circ g). $$
We compare this with the $(m,n)$ entry in
the matrix product\\
 $M(f,g)M(f',g')$, which is
$$ \sum_{p\in S} (f\cdot p\circ g)_m \cdot (f'\cdot n\circ g')_p.$$
 
To begin with
we look at three special cases.

\smallskip
Case 1: $(f,\ONE)(f',g')$, i.e. the first factor is in
the Appell subgroup.

We have to compare the coefficient of $m$ in $f\cdot f'\cdot (n\circ g')$ 
with the $(m,n)$ entry of the matrix product $M(f,\ONE)M(f',g')$. 
The latter entry is
$$ \sum_p (f\cdot p)_m (f'\cdot(n\circ g'))_p = 
\sum_{p|m} (f\cdot p)_m (f'\cdot(n\circ g'))_p,
$$
because $(f\cdot p)_m$ is obviously zero unless $p$ divides $m$.
But $(f\cdot p)_m = f_{m/p}$, so the entry is 
$$ \sum_{p|m} f_{p/m}\cdot (f'\cdot(n\circ g'))_p, $$
and this is exactly the coefficient of $m$ in 
$f\cdot f'\cdot n\circ g'$.  So Equation (\ref{E:2}) holds in this case.

\smallskip
Case 2: $(f,g)(1,g')$, i.e. the second factor is in the
Lagrange subgroup.

We have to compare the coefficient of $m$ in $f\cdot (n\circ g'\circ g)$,
which equals
$$ \sum_{p|m} f_{m/p} (n\circ g'\circ g)_p, $$ 
with the $(m,n)$ entry of $M(f,g)M(1,g')$.  This entry is
$$ \sum_p (f\cdot (p\circ g))_m (n\circ g')_p
= \sum_p \sum_{q|m} f_{m/q} (p\circ g)_q (n\circ g')_p. $$
Interchanging the order of summation, this equals
$$ 
\sum_{q|m} f_{m/q} \sum_p (p\circ g)_q (n\circ g')_p. $$
But the inner sum is exactly the coefficient of $q$
in $(n\circ g')\circ g$, so the entry equals
$$\sum_{q|m} f_{m/q} (n\circ g'\circ g)_q. $$
Replacing the dummy variable $q$ by $p$, we see that
Equation (\ref{E:2}) also holds in this case.

\smallskip
Case 3: $(1,g)(f',1)$, i.e. the first factor is in
the Lagrange subgroup and the second in the Appell subgroup.

The $(m,n)$ entry in
$$ M( (1,g)(f',\ONE) ) = M( f'\circ g, g) $$
is the coefficient of $m$ in 
$ (f'\circ g) \cdot (n\circ g)$, which equals 
$$ \sum_{p|m} (f'\circ g)_p \cdot (n\circ g)_{m/p}.$$
Now $f'\circ g = \sum_q f'_q\cdot(q\circ g)$, so
this matrix entry is 
$$ \sum_{p|m}\sum_q f'_q\cdot(q\circ g)_{m/p}\cdot(n\circ g)_p 
=
 \sum_q f'_q \sum_{p|m} (q\circ g)_{m/p}\cdot(n\circ g)_p. 
$$
Now $\sum_{p|m} (q\circ g)_{m/p}\cdot(n\circ g)_p$
is the coefficient of $m$ in $(q\circ g)\cdot(n\circ g) = (q\cdot n)\circ g$,
so the entry is equal to 
$$ \sum_q f'_q \cdot ((nq)\circ g)_m = \sum_{n|p} f'_{p/n}\cdot (p\circ g)_m.$$
On the other hand, the $(m,n)$ entry in $M(1,g)M(f',\ONE)$ is
$$ \sum_p (p\circ g)_m \cdot (f'\cdot n)_p =
\sum_{n|p} (p\circ g)_m \cdot f'_{p/n},$$
since $(f'\cdot n)_p$ is zero unless $n$ is a factor of $p$.
Thus Equation (\ref{E:2}) holds in this case also.

\smallskip
In the general case, for 
$f,f'\in\F^\times$ and
$g,g'\in\G$, we can write
$$(f,g)(f',g')= (f,\ONE)(1,g)(f',\ONE)(1,g'),$$
 so by Case 1,
$$ M( (f,g)(f',g') ) = M(f,\ONE) M((1,g)(f',\ONE)(1,g')),$$
and by Cases 2 and 3 this is
$$ M(f,\ONE) M((1,g)(f',\ONE)) M(1,g'))
= M(f,\ONE) M(1,g) M(f',\ONE) M(1,g'))
.$$
Applying Case 1 twice more, this equals
$M(f,g) M(f',g')$, so we have shown that 
Equation (2) holds for each
$f,f'\in\F^\times$ and
$g,g'\in\G$.

\medskip
It follows that each $M(f,g)$ is invertible, i.e. lies in $\GL(S,K)$
(and not just in $\gl(S,K)$),  
because it is inverted by $M((f,g)^{-1})$. Here we use the
facts that $\RI$ is a group, that $M$ maps products
in $\RI$ to products in $\gl(S,K)$, and that $M$ maps
the identity $(1,\ONE)$ of $\RI$ to the multiplicative
identity matrix $I$ in the ring $\gl(S,K)$. 

\medskip
Finally, we prove that $M$ is injective:
Suppose $M(f,g) = M(f',g')$.  

Let $m\in S$.
The $(m,1)$ entry of $M(f,g)$ is the
coefficient of $m$ in $f\cdot1=f$. Thus $f_m=f'_m$. 
Since this holds for all $m\in S$, we have $f=f'$.
Since $M$ is a group homomorphism, and $(f,g)=(f,\ONE)(1,g)$, 
$$ M(1,g) = M(f,\ONE)^{-1} M(f,g) =
M(f',\ONE)^{-1}M(f',g') = M(1,g'). $$
For $1\le j\le d$, the $(m,x_j)$ entry of $M(1,g)$
is the coefficient of $m$ in $x_j\circ g$, which is just the
$j$-th component of $g$. Thus $g$ and $g'$ have the same
coefficients in each component, hence $g=g'$.
\end{proof}

\section{Fundamental Theorem on Riordan Arrays}
We continue to assume $d\in\N$ and to denote by $S$
the semigroup of all monic monomials in $d$ variables.

The formula
\begin{equation}\label{E:3}
	M(f,g)_{mn} := (f\cdot(n\circ g))_m, \ \forall m,n\in S 
\end{equation}
defines a matrix $M(f,g)$, an element of $\gl(S,K)$,
whenever $f\in\F$ and $g\in \F^d$, and not just
for $f\in\F^\times$ and $g\in\G$.  The matrix
$M(f,g)$ belongs to $\gl(S,K)$ because
its $(m,n)$ entry is zero as soon as
the degree of $n$ exceeds the degree of $m$.

The right-hand side
of the formula
\begin{equation}\label{E:4}
 (f,g)(f',g'):= (f\cdot f'\circ g, g'\circ g) 
\end{equation}
does not make sense in this generality, but it does
make sense when $f,f'\in\F$ and
$g,g'\in \M^d$.  The right-hand side belongs
to the cartesian product $\F\times\M^d$. 

\begin{definition}  We define  the \emph{Riordan semigroup}
$\RS$ to be the cartesian product $\F\times\M^d$, equipped with
the product defined by Equation (\ref{E:4}).
\end{definition}

It is straightforward to check that the multiplication is
(still) associative, so we do indeed have a semigroup here.
The identity $(1,\ONE)$ of $\G$ is still the identity
for $\RS$.  The subset $\RI\subset\RS$ is precisely the
subgroup of invertible elements of $\RS$.

With its usual ring multiplication, the matrix algebra
$\gl(S,K)$ is a semigroup with identity.    

\begin{theorem}\label{T:2}
The map $M$ is a semigroup homomorphism
from $\RS$ into $\gl(S,K)$.
\end{theorem}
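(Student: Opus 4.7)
The plan is to inspect the proof of Theorem~\ref{T:1} and observe that nowhere in its verification of the multiplicativity formula
$$ M((f,g)(f',g'))= M(f,g)M(f',g') $$
was the invertibility of $f,f'$ or of $g,g'$ actually used. The three special cases treated there (Appell times anything, anything times Lagrange, Lagrange times Appell) are all coefficient-level calculations that only require the expressions $f\cdot(f'\circ g)$ and $g'\circ g$ to make sense, which they do throughout $\RS=\F\times\M^d$. Moreover the decomposition $(f,g)(f',g')=(f,\ONE)(1,g)(f',\ONE)(1,g')$ is a purely algebraic identity in the semigroup $\RS$, with each factor belonging to $\RS$, so the same chain of reductions goes through.

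First, though, I would verify that $M(f,g)\in\gl(S,K)$ for every $(f,g)\in\RS$. If $n\in S$ has degree $k$, then every monomial appearing in $n\circ g$ has degree at least $k$ (since each component $g_j$ lies in $\M$), and hence so does every monomial in $f\cdot(n\circ g)$. Thus $M(f,g)_{m,n}=0$ whenever $\deg m<\deg n$, which implies both that each row of $M(f,g)$ has only finitely many nonzero entries (so $M(f,g)\in\gl(S,K)$) and that all the inner sums appearing in the case analysis of Theorem~\ref{T:1} remain finite in $K$.

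Finally, since $(1,\ONE)$ is the identity of $\RS$ and $M(1,\ONE)=I$, the map $M$ sends identity to identity, and combined with multiplicativity this yields the semigroup homomorphism property. The only substantive work is thus to re-read the three-case computation from Theorem~\ref{T:1} and check that each manipulation---interchange of summation, the identity $(mn)\circ g=(m\circ g)\cdot(n\circ g)$, and $K$-linearity of composition---is a purely algebraic step that survives the removal of the invertibility hypotheses. I do not expect any serious obstacle beyond this bookkeeping.
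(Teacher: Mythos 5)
Your proposal is correct and follows essentially the same route as the paper: the paper's proof of Theorem~\ref{T:2} likewise just observes that the verification of Equation~(\ref{E:2}) in Theorem~\ref{T:1} never used invertibility and that the factorization $(f,g)=(f,\ONE)(1,g)$ persists for general $f\in\F$, $g\in\M^d$. Your preliminary check that $M(f,g)\in\gl(S,K)$ (via the degree bound $\deg n\le\deg m$) is the same observation the paper makes separately just after Equation~(\ref{E:3}).
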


\begin{proof}
This amounts to saying that Equation (\ref{E:2})
holds for general $(f,g), (f',g')\in\RS$.
Inspection of the proof of Theorem \ref{T:1}
reveals that we did not make any use of the 
invertibility of $(f,g)$ or $(f',g')$
in proving Equation (\ref{E:2}). We did
use, at the last step, the factorization
$$ (f,g) = (f,\ONE)(1,g), $$
but this also holds for general $f\in\F$ and
$g\in\M^d$.  Thus the present theorem may
be viewed as a corollary of the proof
of Theorem \ref{T:1}.
\end{proof}

Theorem \ref{T:2} is an extension to
several variables of the so-called Fundamental
Theorem on Riordan Arrays (FTRA). 
The following immediate corollary
may be recognised as alternative
formulations of FTRA:

\begin{corollary}\label{C:1}
For $f,u\in\F$ and $g\in\G$, we have
$$ (f\cdot (u\circ g) )_m 
	= \sum_{\deg p \le \deg m} u_p\cdot (f\cdot(p\circ g))_m $$
whenever $m\in S$.
\end{corollary}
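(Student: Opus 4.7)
The plan is to derive the identity as the extraction of a single matrix entry from Theorem \ref{T:2}. I would apply the Fundamental Theorem to the pair $(f,g)$ and $(u,\ONE)$ in $\RS$. Since $(f,g)(u,\ONE) = (f\cdot(u\circ g),\,g)$ by definition of the semigroup operation, Theorem \ref{T:2} gives
$$ M\bigl(f\cdot(u\circ g),\,g\bigr) \;=\; M(f,g)\,M(u,\ONE). $$

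Next I would take the $(m,1)$ entries on both sides. On the left, this entry is $(f\cdot(u\circ g)\cdot(1\circ g))_m = (f\cdot(u\circ g))_m$, since $1\circ g = 1$. On the right, the product rule in $\gl(S,K)$ gives
$$ \bigl(M(f,g)M(u,\ONE)\bigr)_{m,1} \;=\; \sum_{p\in S} M(f,g)_{m,p}\,M(u,\ONE)_{p,1} \;=\; \sum_{p\in S} (f\cdot(p\circ g))_m\,u_p, $$
using $M(u,\ONE)_{p,1} = (u\cdot(1\circ\ONE))_p = u_p$. Equating the two expressions already yields the stated identity, but with the sum running over all of $S$.

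To reduce the range of summation to $\deg p \le \deg m$, I would use that $g\in\G\subset\M^d$, so each component $g_i$ has zero constant term. Writing $p=x^i$, the product $p\circ g = g_1^{i_1}\cdots g_d^{i_d}$ therefore has every nonzero term of degree at least $\deg p$, and multiplication by $f\in\F$ preserves this lower bound, so $(f\cdot(p\circ g))_m = 0$ whenever $\deg p > \deg m$. I do not foresee a real obstacle: the corollary is genuinely immediate from Theorem \ref{T:2}, and the only point requiring any care is the degree truncation, which is a one-line consequence of $g_i\in\M$.
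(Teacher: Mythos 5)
Your proposal is correct and follows essentially the same route as the paper: specialize the homomorphism property of $M$ to the product $(f,g)(u,\ONE)$, read off the $(m,1)$ entries, and truncate the sum using the fact that $g\in\M^d$ forces $(f\cdot(p\circ g))_m=0$ when $\deg p>\deg m$. (Your choice of factors is the right one; the paper's prose nominally swaps them but its displayed computation agrees with yours, and your justification of the degree truncation is if anything slightly more careful.)
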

\begin{proof}
Apply Theorem \ref{T:2} to the $(m,1)$ components
of both sides of Equation (\ref{E:2}), and
replace $(f,g)$ by $(f,\ONE)$ and $(f',g')$
by $(u,g)$. This gives
$$ (f\cdot (u\circ g) )_m 
	= \sum_p  (f\cdot(p\circ g))_m \cdot u_p.$$
Now note that $p\circ g$ has nonzero 
$m$-coefficient only when the degree of
$p$ is at most the degree of $m$, and
the result follows.
\end{proof}

\subsection{}
We remark that the theorem shows that
the equation in  Corollary \ref{C:1}
also holds more generally, for $g$ belonging
to the semigroup $\M^d$:
\begin{corollary}\label{C:2}
Let $f,u\in\F$ and $g\in\M^d$, $g\not=0$, and suppose that
a lowest-degree monomial $n$ with a nonzero $(g_j)_n$
for some $j\in\{1,\ldots,d\}$ has degree $k$.
Then $k\ge1$ and 
$$ (f\cdot (u\circ g) )_m 
	= \sum_{\deg p \le (\deg m)/k} u_p\cdot (f\cdot(p\circ g))_m $$
whenever $m\in S$.
\end{corollary}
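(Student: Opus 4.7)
The plan is to obtain a FTRA-style identity with an unrestricted sum on the right from Theorem \ref{T:2}, and then to show that all but finitely many of its summands vanish for degree reasons.

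For the first part, I would apply Theorem \ref{T:2} to the product
$(f,g)(u,\ONE)=(f\cdot(u\circ g),\,g)$, which lies in $\RS$ because both $g$ and $\ONE$ belong to $\M^d$ (so no invertibility hypothesis on $g$ is needed). Since $M(u,\ONE)_{p,1}=(u\cdot(1\circ\ONE))_p=u_p$ and $M(f,g)_{m,p}=(f\cdot(p\circ g))_m$, reading off the $(m,1)$ entry of the identity $M((f,g)(u,\ONE))=M(f,g)\,M(u,\ONE)$ yields
$$(f\cdot(u\circ g))_m \;=\; \sum_{p\in S} u_p\,(f\cdot(p\circ g))_m,$$
the same identity as in Corollary \ref{C:1}, but now valid for any $g\in\M^d$.

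The next step is a degree bound on $p\circ g$. Since each $g_j\in\M$ has $(g_j)_1=0$, the hypothesised minimum $k$ of degrees of monomials occurring in some $g_j$ automatically satisfies $k\ge 1$. For a monic monomial $p=x^i$ one has $p\circ g=g_1^{i_1}\cdots g_d^{i_d}$, a product in $\F$ of series each supported in degrees $\ge k$; induction on $|i|=\deg p$ (using that in the product of two series the minimum degree of the support adds) then shows that $\spt(p\circ g)$ lies in degrees $\ge k\deg p$.

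Combining the two, if $\deg p>(\deg m)/k$, i.e.\ $k\deg p>\deg m$, then every $q\mid m$ satisfies $\deg q\le\deg m<k\deg p$, so $(p\circ g)_q=0$, and consequently
$$(f\cdot(p\circ g))_m \;=\; \sum_{q\mid m} f_{m/q}\,(p\circ g)_q \;=\; 0.$$
These summands drop out of the unrestricted sum above, leaving precisely the stated truncated formula. I do not anticipate any real obstacle; the only subtlety is arranging the factorisation $(f,g)(u,\ONE)$ in $\RS$ so that Theorem \ref{T:2} recovers $(f\cdot(u\circ g))_m$ exactly as the $(m,1)$ entry on the left-hand side.
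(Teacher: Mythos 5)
Your proof is correct and follows the same route as the paper's (one-line) proof: first the unrestricted identity $(f\cdot(u\circ g))_m=\sum_{p}u_p\,(f\cdot(p\circ g))_m$ obtained from Theorem~\ref{T:2} exactly as in Corollary~\ref{C:1} (valid for $g\in\M^d$ since no invertibility is used), and then the observation that $(p\circ g)_m=0$ once $k\deg p>\deg m$, which you justify by the degree bound $\spt(p\circ g)\subset\{\deg\ge k\deg p\}$. A small bonus: your factorisation $(f,g)(u,\ONE)$ is the right one for reading off the $(m,1)$ entry --- the paper's proof of Corollary~\ref{C:1} states the substitution the other way round, as $(f,\ONE)(u,g)$, which under the multiplication~(\ref{E:4}) would only yield the trivial identity $(fu)_m=\sum_{p|m}f_{m/p}\,u_p$.
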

\begin{proof}
The point here is that $(p\circ g)_m$ is zero
if $k\cdot\deg p>\deg m$.
\end{proof}

\section{Laurent Series and Verde-Star structures}
The rest of this paper was inspired by a lecture
of Luis Verde-Star, given in Madrid in 2017,
and describing  two-way
infinite matrices associated with an
even 
larger algebra than $\F$, on which a subgroup
of $\G$ acts as a group of
automorphisms.  The topic was related to his
paper \cite{Verde-Star}. 

\subsection{Monomials}
We embed the (commutative, cancellation) semigroup $S$ of monic monomials in the
larger group $\hatS$ of objects $x^i$, where
now we allow any $i\in \Z^d$. As a group, $\hatS$
is isomorphic to the additive group
$(\Z^d,+)$, the free abelian group on
$d$ generators, via the
map $x^i\mapsto i$. It has a partial order, defined by
$$ x^i \le x^{i'} \leftrightarrow i_j\le i'_j \ \forall j. $$
(Equivalently, $m\le n$ means that $n/m\in S$.)
It is not hard to see that each subset $A\subset\hatS$
that is bounded below has a unique greatest lower bound.
We denote this lower bound by $\inf A$.

In case $A\subset S$, $\inf A=\hcf A$
(cf. \cite[Subsection 1.1]{OF-CFM}). 

\subsection{Laurent series}
For an arbitrary formal series 
$f(x):= \sum_{m\in\hatS} f_m m$ in $d$ variables,
with coefficients
$f_m\in K$, we define the \emph{support of $f$}
to be the set
$$ \spt f = \{m\in\hatS: f_m\not=0\}. $$

We define the subset of \emph{Laurent series} by
$$ \V:= \left\{f=\sum_{f\in\hatS}f_mm: \spt f \textup{ is bounded below}
\right\}
$$
The set $\V$ becomes a commutative $K$-algebra with identity
when endowed with
term-by-term addition and convolution multiplication
$$ \left(\sum_m f_mm\right) \left(\sum_m f'_mm\right):=
\sum_m \left( \sum_p f_p\cdot f_{m/p}\right) m.
$$
For nonzero $f\in\V$, we define the \emph{vertex of $f$}
to be
$$ v(f):= \inf\spt(f). $$
This coincides with the definition previously given
in \cite{OF-CFM},
in case $f\in\F$.  

We refer to $\V$ as the \emph{Laurent series
algebra} associated to the integral domain $K$.
It contains $\F$ as a subalgebra. 

We refer to the group $\V^\times$ as \emph{the Laurent 
series group of $K$}.
We note that $\hatS$ is a subgroup of $\V^\times$,
i.e. the product of two monomials $mm'$ in $\V$
is their product in $\hatS$.

For $f\in\V$, we always denote the coefficient
of the monomial $m$ in the series for $f$ by $f_m$.
Notice that $(n\cdot f)_m=f_{m/n}$ whenever
$m,n\in\hatS$.

\subsection{Example}
For a set $A\subset\hatS$ that is bounded below,
let $\chi_A$ denote the series $\sum_{m\in A}m$.
Then $\chi_A\in\V$.  If $B\subset\hatS$ is another
such set, then the product $h:=\chi_A\chi_B$ has
coefficients $h_m$ that count the number of ways
to factor $m$ as $pq$, with $p\in A$ and $q\in B$.

Thus if $f,f'\in\V$, and we let $A:=\spt(f)$ and
$B:=\spt(f')$, then the coefficient $h_m$
measures the complexity of computing $(ff')_m$.

\subsection{Other options}
There are a good many other ways to construct algebras
that can be called Laurent series algebras. The 
algebra $\V$ is just one possibility.

The Laurent series algebra $\V$ is a (rather small)
subalgebra of the quotient field $\widehat{\F}$ of $\F$. Most
series $f\in\F$ are noninvertible in $\V$.  For
instance, in two dimensions the element
$(x_1+x_2)^{-1}\in\widehat{F}$ does not belong to
$\V$ (see below).

It is possible to describe $\V$ in terms of fields
of fractions of power series rings in one variable,
as follows.  Let $\Sigma:=\Sigma_d$ denote the symmetric group
on $\{1,\ldots,d\}$. Then for $\tau\in\Sigma$ we can
form, in turn, (1) the (formal) power series ring $K[[x_{\tau(1)}]]$,
(2) its field of fractions $K((x_{\tau(1)}))$,
(Laurent series in $x_{\tau(1)}$ over $K$),
(3) the power series ring 
$$K((x_{\tau(1)}))[[x_{\tau(2)}]], $$
(4) its field of fractions
$$K((x_{\tau(1)}))((x_{\tau(2)})), $$
(Laurent series in $x_{\tau(2)}$ with coefficients
that are Laurent series in $x_{\tau(1)}$),
and so on, ending up with the field
(2d) of Laurent series 
$$ \V_\tau:= K((x_{\tau(1)}))((x_{\tau(2)}))
\cdots((x_{\tau(d)})). $$
This field $\V_\tau$ is larger than $\V$ once $d>1$, because
an element may have nonzero coefficients 
on monomials involving infinitely many different negative powers
of $x_{\tau(1)}$.  We have, in fact,
$$ \V = \bigcap\{ \V_\tau: \tau\in\Sigma_d \}. $$

\medskip
The paper of Monforte and Kauers
\cite{Monforte-Kauers} gives a useful historical survey
of the various other kinds of Laurent series
algebras one might consider, with particular emphasis
on those that are fields. It appears that such
algebras are useful in applications to combinatorial
problems such as counting lattice-point
in polytopes and counting integer partitions.
They also arose, as far back as the late 1940's, in 
work of A.I.~Malcev and B.H.~Neumann
on embedding group algebras into
a division algebra \cite{Malcev, Neumann}, and indeed germs of the
subject are found in work of Hans Hahn from forty years before that.
There is a substantial literature on
\textit{Malcev-Neumann fields} of power series,
related to totally-ordered groups.
If we exclude consideration of
nonintegral exponents, then the \textit{ne plus ultra},
when it comes to creating a multiplicatively-closed
space of formal series $f=\sum_{m\in\hatS}f_mm$, is to
confine attention to $f$ whose supports lie in
a sub-semigroup of $\hatS$.  This leads one to consider
total orders on $\Z^d$, and the intersection
of $\Z^d$ with cones in $\R^d$.  There are in fact
uncountably many choices, each leading to a different
field of ``Laurent series''.  None of
these contains or is contained in the 
relatively-simpleminded algebra $\V$ that we consider,
which depends on the standard (non-total) lattice
partial order on $\Z^d$, 
but many of the elementary considerations
we meet have, of course, arisen in these previous
investigations, and we make no claim to
originality for our observations about them,
but
as far as we know, the two theorems below are new.

Verde-Star considered a family of partial orders
on $\Z^d$ indexed by nonegative integers, with associated cones
$C_s$, and algebras of Laurent series (over $\C$)
associated to them. Our algebra $\V$ corresponds to
his $\F_s$ with $s=0$, and our $\F$ corresponds to his
$\F_0$, in that case.

\subsection{The Verde-Star-Riordan group}
\begin{definition}
We define the Verde-Star-Riordan group to
be 
	$$ \RV = \Hol_\Kalg(\V,^\times), $$
\end{definition}
Thus
$$ \RV = \V^\times \ltimes \Aut_\Kalg(\V). $$

\subsection{The group $\V^\times$}
\begin{proposition}\label{P:VS}
Let $f\in\V$. 
(1) If $f\not=0$, then
$h=v(f)^{-1}\cdot f\in\F$, and $v(h)=1$.
\\
(2) $\V$ is an integral domain.
\\
(3) $v(ff')=v(f)v(f)$ whenever $f,f'\in\V$.
\\
(4) $f\in\V^\times$ if and only if $f_{v(f)}\in K^\times$. 
\end{proposition}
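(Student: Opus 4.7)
The plan is to prove the three parts in order, using (1) as a tool for (2), and both for (3). First I would prove (1) by exactly the argument of Proposition \ref{P:v(f)-factor}, adapted from $S$ to the ordered group $\hat S$. Since $v(f)=\inf\spt(f)\le m$ in $\hat S$ for every $m\in\spt(f)$, the ratio $m/v(f)$ lies in $S$; hence $h:=v(f)^{-1}f=\sum_{m\in\spt(f)}f_m\,(m/v(f))$ has support in $S$ and so $h\in\F$. The fact that $v(f)$ is the \emph{greatest} lower bound of $\spt(f)$ supplies, for each $j$, a monomial $m\in\spt(f)$ with $m_j=v(f)_j$, hence $(m/v(f))_j=1$; so $v(h)=1$.

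Next, (2) reduces to the already-proved integrality of $\F$. If $f,f'\in\V$ are nonzero with $ff'=0$, I would write $f=v(f)h$ and $f'=v(f')h'$ with $h,h'\in\F$ nonzero, via (1). Since each monomial in $\hat S$ is a unit in $\V$, the equation $v(f)v(f')\,hh'=0$ forces $hh'=0$ in $\V$, hence in $\F$, contradicting integrality.

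For (3), the easy direction is the converse. If $f_{v(f)}\in K^\times$, then writing $f=v(f)h$ as in (1) gives $h_1=f_{v(f)}\in K^\times$; Proposition \ref{P:F^times} then yields $h\in\F^\times$, and $v(f)^{-1}h^{-1}\in\V$ inverts $f$.

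The forward direction of (3) is the main obstacle. Given $fg=1$, I would apply (1) to both factors to write $f=v(f)h$ and $g=v(g)k$ with $h,k\in\F$ nonzero and $v(h)=v(k)=1$. Then $hk=v(f)^{-1}v(g)^{-1}$, and since the left side lies in $\F$ while the right lies in $\hat S$, this common value must be a monomial $n\in S$. The crucial auxiliary lemma is that \emph{$v(hk)=1$ whenever $h,k\in\F$ are nonzero with $v(h)=v(k)=1$}, equivalently that each principal ideal $(x_j)\subset\F_d$ is prime. I would obtain this primality from the isomorphism $\F_d/(x_j)\cong\F_{d-1}$ together with the inductively proved integrality of $\F_{d-1}$. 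Granted the lemma, the monomial $n=hk$ satisfies $v(n)=1$, forcing $n=1$; hence $h\in\F^\times$, and Proposition \ref{P:F^times} delivers $h_1=f_{v(f)}\in K^\times$. The primality of the $(x_j)$ in $\F_d$ is the one ingredient not explicit in the text; everything else is routine bookkeeping atop the propositions already established.
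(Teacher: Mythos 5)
Your parts (1), (2), and the ``if'' direction of (3) coincide with the paper's own proof; the difference is in the ``only if'' direction of (3), where you take a genuinely different and correct route. The paper normalizes only $f$: it sets $h=f/v(f)$, lets $b=h^{-1}\in\V$, and rules out a negative exponent in $v(b)$ by extracting from $hb=1$ the slice at the minimal $x_1$-exponent; that slice factors as a product $ar$ of two nonzero series yet must vanish, contradicting the integrality of $\V$ established in (2). You instead normalize both $f$ and its inverse $g$, observe that $hk=v(f)^{-1}v(g)^{-1}$ lies simultaneously in $\F$ and in $\hat S$, hence is a monomial $n\in S$, and then invoke your auxiliary lemma (that $v(hk)=1$ whenever $v(h)=v(k)=1$, i.e.\ that each $(x_j)$ is prime in $\F_d$) to force $n=1$ and $h\in\F^\times$. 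Your lemma is sound and does follow from the quotient $\F_d/(x_j)\cong\F_{d-1}$ being a domain --- which is exactly the slice computation already used in the paper's induction proving $\F$ is a domain, so you are right that it is implicit rather than stated. The trade-off: the paper's argument needs nothing beyond part (2) and works directly with the inverse in $\V$, while yours is more symmetric, isolates a reusable multiplicativity statement about the vertex, and avoids any contradiction argument about $v(b)$. Both establish the result.
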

\begin{proof}
(1) 
	The fact that $v(f)=\hcf(\spt(f))$ implies that
for each $j\in\{1,\ldots,d\}$ there exists
$p\in\spt(f)$ with $f_p\not=0$ and $p_j=v(f)_j$.
Thus $q:=p/v(f)$ belongs to $\spt(h)$ and has $q_j=1$
(i.e. $q$ `does not involve $x_j$'). Thus
$v(h)=1$.
(cf. \cite[proposition 1.1]{OF-CFM}, which has essentially
	the same proof.)
\\
(2) If $f$ and $f'$ are nonzero elements
of $\V$, then $h:=f/v(f)$ and $h':=f'/v(f')$
are nonzero elements of $\F$, and
if $ff'=0$, then $hh'=0$, contradicting
the fact that $\F$ is an integral domain.
Thus $\V$ is an integral domain.
\\
(3) 
	 We have to show that $v(ff')_j= v(f)_jv(f')_j$,
	for each $j\in\{1,\ldots,d\}\}$.
	We may assume that $j=d$, without loss in generality.
	By the definition
	of the vertex, there must be a nonzero coefficients
	on some term of $f$ of the form $mv(f)_d$, where
	$m\in\hatS$ does not involve $x_d$ at all. In other words,
	if we write $h:=f/v(f)_d$, then 
	$$k: =h(x_1,\ldots,x_{d-1},0) \not =0,$$
	and $k\in\V_{d-1}$. 
	Similarly, letting $h'=f'/v(f')_d$, 
	$$k': =h'(x_1,\ldots,x_{d-1},0) \not =0,$$
	and $k'\in\V_{d-1}$.  Since $\V_{d-1}$ is an integral domain,
	we have $kk'\not=0$, $v(hh')_d=1$, and $v(ff')_d=v(f)_dv(f')_d$.
\\
(4) 
Let $h:= v(f)^{-1}f$.  Then $h\in\F$ and $h_1=f_{v(f)}$.
Also $v(h)=1$.
\\
If $f_{v(f)}\in K^\times$, then $h\in\F^\times$, and
$$ f \cdot ( v(f)^{-1}\cdot h^{-1} ) = (v(f)^{-1}\cdot f)\cdot
h^{-1} = h\cdot h^{-1} = 1, $$
so $f\in\V^\times$.

Conversely, if $f$ has an inverse $k\in\V$, then
$$ 1 = fk = f v(f)^{-1} v(f)k = hv(f)k, $$
so $h\in\V^\times$. 
	Thus it remains, in view of  \cite[proposition 1.3]{OF-CFM}
to show that $h^{-1}\in\F$.
Suppose $h^{-1} = b$. We have to show that $1\le v(b)$.

Suppose, on the contrary, that $1\not\le v(b)$. Then
some $x_j$ appears with a negative power in $v(b)$,
and we may suppose without loss in generality
that it is $x_1$, so that $v(b)_1 = x_1^{-k}$
for some $k\in\N$. The series
$$ a:= \sum_{m\in\hatS, m_1= x_1^{-k}} b_mm
$$ 
is not zero, by definition of $v(b)$.

Similarly, since $v(h)=1$, 
the series
$$ r:= \sum_{m\in S, m_1=1} h_mm $$
is not zero.

Thus, since $hb=1$, 
$$ 0 = \sum_{m\in\hatS, m_1= x_1^{-k}} (hb)_mm
= ar, $$
contradicting the fact that $\V$
is an integral domain.  Thus $b\in\F$, as required.
\end{proof}   

For example, taking $f=x_1+ x_2\in\F_2$, we have
$v(f)=1$ and $f_1=0$, so $f$ is not invertible.
This is probably just as well, for its reciprocal 
seems to have two different formal Laurent series:
$$ \frac1{x_1+x_2} \sim
\sum_{k=0}^\infty x_1^{-k-1}x_2^k
\sim
\sum_{k=0}^\infty x_1^kx_2^{-k-1}.
$$
Of course neither series belongs to $\V$.

From the proof, we note:
\begin{corollary}\label{C:V-times-is-product}
	(1) $\V^\times = \hatS \F^\times$ and $\hatS\cap\F^\times=(1)$,
	i.e. $\V^\times$ is the inner direct product of
	$\hatS$ and $\F^\times$.
\\(2) The restriction $v|\V^\times$
	is a group homomorphism from $\V^\times$ onto $\hatS$.
\\(3)
$\ker v|\V^\times = \{f\in \V^\times: v(f)=1\} = \F^\times$.
\qed
\end{corollary}

\subsection{Composition}
For $g\in\G$, we have seen that
the composition
map $C_g$ is a $K$-algebra automorphism of $\F$.
We cannot define the composition $f\circ g$
for arbitrary $f\in\V$ and $g\in\G$ in a sensible way.
For instance, one would expect the composition
of $x_1^{-1}$ and the map $(x_1,x_2)\mapsto
(x_1+x_2,x_2)$ to represent $(x_1+x_2)^{-1}$, but 
$x_1+x_2$ is not invertible in $\V$.
However, we have the following limited composition:

Given two $d$-tuples\footnote{
Note that there is room for confusion
between $f_1\in\F$, the first component of
an $f\in\F^d$, and $f_1\in K$, the coefficient
of $1_S$ in a series $f\in\F$. In
the first case the subscript $1$ is $1_{\N}$,
and in the second it is $1_S$. It is necessary
to pay attention to the context to avoid this confusion.}
$f:=(f_1,f_2,\ldots,f_d)\in\F^d$ and
$f':=(f'_1,f'_2,\ldots,f'_d)\in\F^d$,
we define the coordinatewise product
$f*f'$ by setting
$$ (f*f')(x) = (f_1f'_1,\ldots, f_df'_d).$$  
For instance,
$$ (x_1,x_2)*(1+x_1+x_2^2, 1-x_2-x_1^2)
= (x_1+x_1^2+x_1x_2^2, x_2-x_2^2-x_1^2x_2).$$
This product $*$ maps $\F^d\times\F^d\to\F^d$, and
is commutative and associative. If we define
addition coordinatewise, then $*$ distributes
over addition, and $\F^d$ becomes a
$K$-algebra.  In fact, it is just the
direct product of $d$ commuting copies of $\F$.
It has the identity $(1,\ldots,1)$
and its group of units is 
$$ (\F^d)^\times = (\F^\times)^d
=\{f\in\F^d: f_j\in\F^\times\ \forall j\}.$$
$\F^d$ is not an integral domain when $d>1$,
since, for instance, 
$$ (1,0)*(0,1)=(0,0)=0_{\F^d}.$$
In case $f$ is a formal map, i.e. $f\in\M^d$, 
the product $f*f'$ also belongs to $\M^d$
for each $f'\in\F^d$.  In the particular case
$f=\ONE$, we use the more suggestive notation
$$x*f':= \ONE*f' = (x_1f'_1,\ldots,x_df'_d).$$

We define the set
$$ \K:= \{ x*f: f\in(\F^d)^\times \}.$$
This is a subgroup of $\G$, because the linear
part
$L( x*f )$ of $x*f$ is represented by the
diagonal matrix $\diag((f_1)_1,(f_2)_1,\ldots,(f_d)_1)$,
and this is invertible in $\gl(d,K)$ 
if each component $f_j\in\F^\times$.  Note that
the group $L(\K)\le\GL(K,d)$ of $K$-linear maps, isomorphic
to $(K^{\times},\times)^d$ may be regarded as 
a subgroup of $\K$, and then $\K$ is the
semidirect product of 
$$\ker L|\K = \{(x_1(1+\HOT),\ldots,x_d(1+\HOT))\}$$
and $K^{\times d}$.  

Moreover, if $g=x*f\in\K$, then 
each $f_j$ belongs to $\F^\times$, and hence
each component $g_j=x_j f_j$ belongs to
$\V^\times$.
This allows us to define $m\circ g\in\V^\times$,
for arbitrary $m\in \hatS$ by writing
$m=x^i$ and defining
$$ m\circ g:= \prod_{j=1}^d (x_j f_j)^{i_j}.$$

Notice that $m\circ g= m\cdot m(f_1,\ldots,f_d)\in m\F^\times$
when $m\in\hatS$ and $g=x*f\in\K$.

We can then define the composition $f\circ g$
for arbitrary $f\in\V$ and $g\in\K$ by
setting
$$ f\circ g:= \sum_{m\in\hatS} f_m m\circ g.$$
this formal Laurent series
actually defines an element of $\V$, because
$m = v(m\circ g)$, so $v(f)=v(f\circ g)$.

We denote the composition map $f\mapsto f\circ g$
on $\V$ by the same symbol $C_g$ as we already 
use for its restriction to $\F$.

\begin{proposition}
	For each $g\in\K$, the composition
	map $C_g:\V\to\V$ is a $K$-algebra
	automorphism of $\V$.
\end{proposition}
\begin{proof}
	Composition preserves sums, products
	and scalar multiples. So $C_g$ is
	a $K$-algebra homomorphism. It is
	inverted by composition with $g^{-1}$,
	so it is an isomorphism.
\end{proof}

\begin{corollary}
	The map $g\mapsto C_g$ is
	an injective homomorphism of
	$\K^\op$ into $\Aut_\Kalg(\V)$.
\end{corollary}
\begin{proof}
	The injectivity follows from the
	fact that $x_j\circ g=g_j$, the
	$j$-th component of $g$.

	The map is a homomorphism because
	of the associativity of composition.
\end{proof}

\subsection{Permutations}
The permutation group $\Sigma:=\Sigma_d$ acts on the 
symbols $x_j$ by permuting the $j$'s, and this
gives another automorphism of $\V$.
This is the same as the composition automorphism
corresponding the the linear isomorphism
of $K^d$ obtained by permuting the entries
of the $d$-tuples, so we also regard $\Sigma$
as a subgroup of $\G$, and note that it intersects
$\K$ only in the identity, and acts on
$\K$ by conjugation. The product set $\K\Sigma_d$
is thus a subgroup of $\G$, isomorphic to the
semidirect product $\K\ltimes\Sigma$.

With this convention, we can state:

\begin{proposition}
	The map $g\mapsto C_g$ is an injective
	homomorphism of $(\K\Sigma)^\op$
	into $\Aut_{\Kalg}(\V)$.
	\qed
\end{proposition}

\section{Automorphisms of $\V$}
We are going to prove: 
\begin{theorem}\label{T:Aut-V}
	Assume that the integral domain $K$ has a reciprocal
	of $n\cdot1_K$ for some integer $n>1$.
	Then
	the map $g\mapsto C_g$ is an isomorphism 
	$$(\K\Sigma)^\op \simeq 
	\Aut_\Kalg(\V).$$
\end{theorem}

The hypothesis on $K$ is satisfied if $K$ has positive characteristic or is a field or contains the ring $\Z_p$
of $p$-adic integers  for some prime $p$.

This will immediately give us this description
of the Verde-Star-Riordan group:

\begin{corollary}
	If $n\cdot1_K$ is invertible in $K$ for some
	integer $n>1$, then
	$$\Hol_{\Kalg}(\V,^\times) \simeq 
	\V^\times\ltimes (\K\Sigma)^\op
	=\hatS\F^\times\ltimes (\K\Sigma)^\op
	,$$
with the product given by the usual formula
$$ (f,g) (f',g') = (f\cdot(f'\circ g), g'\circ g) $$
for $f,f'\in\V^\times$ and $g,g'\in\K$.
\end{corollary}

The proof proceeds by a series of lemmas.

\begin{lemma} Each $x_j$ is prime in $\F$.
\end{lemma}
\begin{proof}
	Without loss in generality, consider $j=d$.

	The map $\Phi:f(x_1,\ldots,x_d)\mapsto f(x_1,\ldots,x_{d-1},0)$
	is a $K$-algebra homomorphism $\F_d\to\F_{d-1}$. (We make the convention that
	$\F_0:=K$.)  We have
	$$ \ker\Phi = \{f\in\F_d: x_d|f\}.$$
	Suppose $x_d| ff'$. If $x_d\not| f$ and $x_d\not|f'$, then
	$$ \Phi(ff') = \Phi(f)\Phi(f') \not=0, $$
	since $\F_{d-1}$ is an integral domain. This is a contradiction.
	Thus $x_d|f$ or $x_d|f'$.
\end{proof}

\ignore{
\begin{lemma}
	Let $f,f'\in\F$ and $ff'=m\in S$. Then
	there exist $p,r\in S$ and $h\in\F^\times$ such that 
	$m=pr$,
	$f=ph$ and $f'=rh^{-1}$.
\end{lemma}
\begin{proof} This follows from the previous lemma by using
	induction on the degree of $m$.
\end{proof}
}

We abbreviate $\A:= \Aut_\Kalg(\V)$.

\begin{lemma}\label{L:aut-fixes-F-times}
	Suppose $n\cdot1_K$ is invertible in $K$ for some 
	integer $n>1$. 
	Let $\alpha\in\A$ and $f\in\F^\times$. Then
	$\alpha(f)\in\F^\times$.
\end{lemma}

\begin{proof}
	The binomial
	coefficients $\binom{1/n}{k}$ are well-defined in $K$
	for each nonnegative integer $k$ \cite{OF-IBC}.
	So if $h\in\M$
	the formula
	$$ r:= \sum_{k=0}^\infty \binom{1/n}{k} h^k $$
	defines an element of $\F^\times$ which is an $n$-th
	root of $1+h$: $r^n=1+h$.
	 
	 If the lemma were false, 
	 $v(\alpha(\F^\times))\subset \hatS$ would  not
	 be just $(1)$, so there would exist some $j$ such that 
	 the induced homomorphism
	 $\phi:\F^\times \to (\Z,+)$ defined by taking $f$ to the 
	 exponent of $x_j$ in $v(\alpha(f))$ is nontrivial,
	 and hence its image would be $m\Z$ for some nonzero
	 $m\in\Z$. Choose $f\in\F^\times$ with $\phi(f)=n$.
	 Replacing $f$ by $f/f_1$, we may assume $f_1=1$.
	 Then $f=1+h$ with $h\in\M$, and
	 choosing an $n$-th root $r$ as above, we would have
	 $n\phi(r)=\phi(f)=m$ and $0<\phi(r)<m$,
	 which is impossible.
\end{proof}

\begin{proof}[Proof of Theorem \ref{T:Aut-V}]
	Suppose $\alpha\in\A$.  By Lemma \ref{L:aut-fixes-F-times},
	$\alpha$ maps $\F^\times\to\F^\times$. Since $\alpha(1)=1$,
	$\alpha$ maps $\F\to\F$. Since the same is true of $\alpha^{-1}$,
	the restriction $\alpha|\F\in\Aut_\Kalg(\F)$.  By Theorem \ref{T:Aut-F},
	$\alpha$ coincides on $\F$ with composition with the map
	$$ g:=(\alpha(x_1),\ldots,\alpha(x_d)) \in\G. $$

	Consider $\alpha(x_j)$. Since $x_j\in\K^\times$, we have
	$\alpha(x_j)\in\K^\times$, so $\alpha(x_j)=m_jh_j$
	for some monomial $m_j\in\hatS$ and some $h_j\in\F^\times$.
	Since $x_j\in\F$, we have $m_j\in\F$.  Since $x_j$ is prime
	in $\F$, so is $m_j$, so it is equal to $x_r$ for some
	$r=\tau(j)$, say. Since $L(g)\in\GL(d,K)$, $\tau$ is a permutation,
	$\tau\in\Sigma_d$.  Thus $\tau^{-1}g\in\K$, and $g\in\Sigma\K=\K\Sigma$.
Thus the composition map $C_g$ is a $K$-algebra automorphism of the whole
of $\V$. 

It remains to show that $\alpha=C_g$. Fix $f\in\V$. Then
	$f=mr$, with $m=v(f)\in\hatS$ and
	$r\in \F$.
	Then the automorphism property gives $\alpha(m)=m\circ g$.
	Also $\alpha(r)=r\circ g$, since $r\in\F$, so we get
	$$\alpha(f) =\alpha(m)\alpha(r)= (p\circ g) \cdot (r\circ g) = f\circ g, $$
	as required.
\end{proof}

\subsection{Question}
We do not know whether Theorem \ref{T:Aut-V} remains true
in characteristic zero without the hypothesis on the existence of a reciprocal for some integer greater than one.  In particular, we do not know it for $K=\Z$. Could there be
some more exotic $\Z$-algebra automorphism (i.e. ring
automorphism) of $\V(\Z)$?  

The ring $\V(K)$ becomes a (complete, metrisable) topological ring if we
define a neighbourhood of $0$ to be a set
that contains some power $\M^k$ ($k\in\N$) of the maximal
ideal of $\F$. Lemma \ref{L:aut-fixes-F-times}
immediately yields the following ``automatic continuity''
result:

\begin{corollary}\label{C:automatic-continuity}
	If $n\cdot1_K\in K^\times$ for some integer $n>1$,
	then each $K$-algebra automorphism of $\V$ is
	continuous.
\qed\end{corollary}

We also have the following for all $K$:
\begin{proposition}
	If $\alpha$ is a continuous $K$-algebra automorphism
	of $\V$, then $\alpha\in\K\Sigma$.
\end{proposition}
\begin{proof}
We claim that $\alpha$ maps $\F$ into $\F$. 

For otherwise there is some $f\in\F$ with $\alpha(f)\not\in\F$,
so $h:=f-f_1\in\M$, and $\alpha(h)=\alpha(f)-f_1\not\in\F$.
For some $j\in\{1,\ldots,d\}$, $v(h)_j<1$. By continuity, since $\M$ is a neighborhood of $0$,
there exists some $N\in\N$ such that 
$\alpha(\M^N)\subset\M$.  But $h^N\in\M^N$, and 
$v(\alpha(h^N))_j=v(\alpha(f))_j^N<1$, a contradiction.

The rest of the proof proceeds just like that of
Theorem \ref{T:Aut-V}.
\end{proof}

\begin{corollary}
	For each $K$ and $d$, we have
	$$ \Hol_C(\V,^\times) \simeq \V^\times\ltimes(\K\Sigma)^\op $$
	where $C$ is the category of topological $K$-algebras
	and continuous $K$-algebra automorphisms.
	
\qed
\end{corollary}

Thus $\V^\times\ltimes(\K\Sigma)^\op$ is 
a holomorph with respect to a different category. We conjecture that
there are no exotic automorphisms 
in any case, and the two holomorphs coincide. 

\ignore{
An automorphism extends to tensor products $\Z_p\otimes_\Z\V$.
There is an $\M$-topology and an $\alpha(\M)$-topology.
}

\subsection{Distinguished subgroups}
By analogy with the terminology used for the
Riordan group, we define analogues 
of $\AP$ and $\LA$,
	the \emph{extended Appell subgroup} of $\Aut_\Kalg(\V)$
	$$ \EAP:= \EAP_d(K):= \{(f,\ONE):f\in\V^\times\},
	$$ 
	and the \emph{restricted Lagrange subgroup}
	$$ \ELA:= \ELA_d(K):= \{(1,g):g\in\K\Sigma\}.$$

We need to see what happens when an element of $\ELA$
acts on monomials.  First, for $m\in\hatS$ we define
\emph{the symmetric infimum}:
$$ \sinf (m) := \inf\{m\circ\tau:\tau\in\Sigma\}. $$
 
\begin{proposition}\label{P:monomial-circ-g}
	Let $m\in\hatS$ and $g\in\
\K\Sigma$. Let
$m\circ g= \sum_{p\in\hatS} f_pp$.
Then $f_p=0$ unless $p\ge\sinf(m)$.
\end{proposition}
\begin{proof} We can
	write $g=k\circ \lambda\circ\tau$ with $\tau\in\Sigma$, 
	$\lambda\in\K^{\times d}$ and $k\in\ker L|\K = \TI\cap\K$.
	Then 
	$$m\circ k= m(x_1(1+\HOT),\ldots,m_d(1+\HOT)),$$
	so $(m\circ k)_p=0$ unless $m\le p$. 
	Now $m\circ k\circ \lambda$ is obtained from
	$m\circ k$ by replacing each $x_j$ by a multiple
	$\lambda_jx_j$ with $\lambda_j\in K^\times$, so
	$(m\circ k\circ\lambda)_p=0$ if and only if
	$(m\circ k)_p=0$.  Finally, $(m\circ k\circ\lambda\circ\tau)_p$
	is nonzero if and only if $m\le p\circ\tau$,
	so only if 
	$p\ge\sinf(m)$.
	\end{proof}

\section{The Verde-Star matrix representation}
We define $M:\RVV\to \gl'(\hatS,K)$ by
$$ M(f,g)_{mn}:= ( f\cdot(n\circ g))_m, \ \forall m,n\in\hatS.$$
This is just another example of a matrix indexed by
a set, but you may also think of it as a `doubly-infinite'
matrix if you like, since the index set $\hatS$ has no
lower bound, unlike $S$.

We observe some obvious symmetries of $M(f,g)$:
\begin{proposition}
(1) $M(pf,g)_{mn} = M(f,g)_{m/p\,n}$ if $p\in\hatS$.
\\
(2) $M(f,\lambda\circ g)_{mn} = n(\lambda)M(f,g)_{mn}$ if $\lambda\in K^{\times d}$.
\\
(3)$M(f,\tau\circ g)_{mn}= M(f,g)_{m n\circ\tau}$ if
$\tau\in\Sigma$.
\qed
\end{proposition}

If we take $f\in\V^\times$ and  $g=x*h$ with $h\in(\F^\times)^d$,
then for $n,m\in\hatS$ we have
$$ (n\circ g) = n \cdot (n\circ h), $$
and if we abbreviate $v=v(f)$, then
$$ f\cdot(n\circ g) = v\cdot \left(\frac{f}{v}\right)
\cdot n\cdot (n\circ h), $$
so the entry $M(f,g)_{mn}$ is
$$ \left( vn\cdot \left(\frac{f}{v}\right) \cdot (n\circ h)\right)_m
=\left( (f/v) \cdot (n\circ h)\right)_{m/vn}.$$
Since $f/v\in\F$ and $n\circ g\in\F$, this entry
is zero unless $m/vn\ge1$.  Thus,
for fixed $m\in\hatS$, the entry is zero unless $n\le m/v$,
and for fixed $n\in\hatS$, the entry is zero
unless $m\ge vn$.
Hence $M$ maps $\V^\times\times\K$ into the set
$\widetilde{\gl}(\hatS, K)$ of matrices
$M\in\gl'(\hatS,K)$ such that
$$\begin{array}{rcl}
\forall m\in\hatS\ \exists R\in\hatS &:& n>R \implies M_{mn}=0,\\
\forall n\in\hatS\ \exists T\in\hatS &:& m<T \implies M_{mn}=0.
\end{array}.
$$
If $(f,g)$ is any element of $\RVV$, then $g=\tau\circ(x*f)$
for some $\tau\in\Sigma$ and $x*f\in\K$, and then
$M(f,g)_{mn}=M(f,x*f)_{mn\circ\tau}$ is zero unless
$v(f)n\circ\tau\le m$, so again we see that $M(f,g)$
belongs to $\widetilde\gl(\hatS,K)$.
 
The set $\widetilde{\gl}(\hatS,K)$ is closed under the 
usual matrix addition and multiplication, and forms
a $K$-algebra.  We denote its group of invertibles
by 
$$\widetilde{\GL}(\hatS,K):=  \widetilde{\gl}(\hatS,K)^\times.$$ 

The following is the Laurent series version of the Fundamental Theorem:
\begin{theorem}\label{T:3}
$M:\RVV\to\widetilde{\GL}(\hatS,K)$ is an injective
group homomorphism.
\end{theorem}
\begin{proof}
This is essentially the same proof as that of
Theorem \ref{T:1}, except that we have to 
be a little careful about the sums involved.
The key point is that ``intervals'' in $\hatS$,
i.e. sets of the form 
$$\{ m\in\hatS: n\le m\le p\}, $$
with $n,p\in\hatS$ are finite. If nonempty, i.e. if
$n\le p$, then regarded as sets in $\Z^d$ they are 
rectangular boxes of lattice points.  

Fix $(f,g),(f',g')\in \RVV$, and $m,n\in\hatS$. 
We need to show that
$$ M((f,g)(f',g'))_{mn}
=\sum_{p\in\hatS} M(f,g)_{mp}
M(f',g')_{pn}. $$
Once we have this, the remainder of the proof
goes as before.

Let $v=v(f)$, $v'=v(f')$.

For $p,r,s\in\hatS$, consider the element 
$ a:= f_rf'_s(p\circ g)_{m/r}(n\circ g)_{p/s}\in K.$
This is nonzero only if
$$ r\ge v, s\ge v', \frac mr\ge p \textup{ and } \frac ps \ge n, $$
and hence only if 
$$ nv'\le ns\le p \le \frac mr  \le \frac mv, $$
so only for a finite set of $p$. Also, for each $p$,
the term $a$ is nonzero only for
$v\le r\le m/p$ and $v'\le s\le p/n$,
i.e. for a finite set of pairs $(r,s)$.
Thus the sum
$$ \sum_{p,r,s\in\hatS} 
f_rf'_s(p\circ g)_{m/r}(n\circ g')_{p/s}
$$
has only a finite number of nonzero terms, and 
can be summed in any order to the same value.
We find that
$$\sum_{p\in\hatS} M(f,g)_{mp}
M(f',g')_{pn}
= \sum_p\sum_r\sum_sf_rf'_s(p\circ g)_{m/r}(n\circ g')_{p/s},$$
and rearranging, this equals
$$ \sum_r f_r \sum_s f'_s \sum_p (p\circ g)_{m/r}(n\circ g')_{p/s}. $$
The inner sum is equal to 
$$(s \cdot n\circ g')\circ g)_{m/r}
=(r\cdot s\circ g \cdot n\circ g'\circ g)_m$$
so adding up, the whole is
$$ (f \cdot f'\circ g \cdot n\circ g'\circ g)_m = M((f,g)(f',g'))_{mn},
$$
as required.
\end{proof}

\subsection{Another description}
We can view the Verde-Star-Riordan group 
as a subgroup of the automorphism group of $\V$
in the category $\Kmod$ of $K$-modules.

For $f\in\V$, we define the \emph{multiplier map}
$M_f:\V\to\V$ by 
$$ M_f(f') = f\cdot f',\ \forall f'\in\V. $$  
The map $M:f\mapsto M_f$ is an injection of
$\V$ into the $K$-algebra of $K$-module
endomorphisms of $\V$.  It maps
invertible elements $f\in\V^\times$ to
$K$-module automorphisms of $\V$.

We have seen that for $g\in\K\Sigma$ the composition
map $C_g$ is a $K$-algebra automorphism of $\V$,
so, \textit{a fortiori,} it is a $K$-module 
automorphism.

The topological-K-algebra holomorph 
$\V^\times\ltimes(\K\Sigma)^{\op}$ is mapped
injectively into $\Aut_{\Kmod}(\V)$ by the map
$$ (f,g) \mapsto M_f\circ C_g. $$

\section{Reversibility}
The work of Luzon et al \cite{LMP} completed the explicit description
of the reversible elements of $\RI_1(\C)$. We would like to
see similarly explicit descriptions of $\RI_d(K)$
for all $d\in\N$ and each integral domain with identity.

The following basic result translates the problem into
\lq nuts and bolts\rq:

\begin{proposition}
	Fix $d\in\N$ and an integral domain $K$ with identity,
	and abbreviate $\RI=\RI_d(K)$.
	Let
	$(a,b), (f,g)\in\RI$. 
	Then $(a,b)$ reverses $(f,g)$ if and only if
	$b$ is a fixed point of $y\mapsto gyg$ and
	$a$ is a fixed point of 
	$$ x\mapsto g(f)\cdot b(x)\cdot b(b(f)). $$
\end{proposition}
(The dot here represents the multiplication in $\F_d$.)
\begin{proof}
	Directly from the definition one deduces that
	$(a,b)$ reverses $(f,g)$ if and only if
	$$ (f,g)(a,b)(f,g)=(a,b). $$
	The left hand side evaluates to
	$$\left(g(f)\cdot b(a) \cdot (b^2)(f),gbg\right). $$
\end{proof}

Obviously, this is just a special case of something
that holds in any semidirect product.  We would like 
a completely explicit classification of the $(f,g)\in\RI$
for which there exists a reverser $(a,b)$.  There is
a good deal to be done:

\begin{itemize}
	\item  The reversibility problem for $\RI_d(K)$ 
		is completely solved
		only for $d=1$ and $K=\C$ \cite{LMP}.
	\item The first step would be the explicit
		description of the reversibles 
		in the Lagrange subgroup, isomorphic
		to $\G$.  This has been completed
		for $d=1$ and fields $K$ of characteristic zero \cite{OFS},
		but remains open for some other
		integral domains and is completely
		open in positive characteristic.
	\item  For $d>1$, only the so-called \textit{generic}
		reversibles of $\G_d(\C)$ have been described \cite{OFZ},
		and it remains to deal with the rest and with
		other rings $K$. For instance, we do not
		know which $g\in\G_2$, tangent to the identity,
		are reversible in $\G_2(K)$ for any ring $K$ of
		characteristic zero.
\end{itemize}

As Polya said, for every unsolved problem there is always an easier problem
that you can't solve, and in this case it may be useful to
start by asking which elements of $\G$, tangent to the identity,
are reversible in the whole Riordan group. Also, one could hope
that the matrix representation of our main result will help.

\section{Other Riordan Group Constructions}
\subsection{} Cheon and collaborators have introduced other higher-dimensional 
kinds of Riordan groups.  A referee suggested we relate them
to our work.

The Riordan groups $\RI_d(K)$ we have just defined in this paper are constructed from
structures composed of formal maps and series in \textit{several variables},
and their elements are best represented as \textit{two-dimensional arrays}
where the rows and columns are indexed by the semigroup of $d$-dimensional
monic monomials.  It would be possible \textit{but not particularly helpful} to
represent them as multidimensional arrays indexed by integers.  
The labelling by these semigroup
elements preserves information essential to the group structure, and
enables simple and transparent formulas.

\subsection{}
The \textit{multidimensional Riordan arrays} introduced and applied by Cheon and Jin
\cite{CJ} 
are obtained from a group of formally-invertible formal maps in \textit{one variable}
by a finite number of repeated semidirect products with an abelian group
of convolution-product-invertible formal series in \textit{one variable}, and
the elements of their groups are representable as \textit{multidimensional arrays} 
(indexed by integers). They work over the ring $K=\C$ of complex numbers,
but the construction is valid for any ring.
These Cheon-Jin groups relate to the composition action of what we call $\G_1=\G_1(\C)$
on the group 
$(\F_1^\times)^n$, 
of $n$-tuples of elements of 
$\F_1^\times$, 
and also on the space $(\F_1)^{\N}$
of infinite sequences of power series in one variable.  These groups
are unlike our $\RI_d$ for $d>1$, but the idea could be used to define
corresponding extensions $(\F_d^\times)^n\ltimes \G_d$.

\subsection{}
In another direction, 
Cheon, Huang and Kim \cite{CHK} discuss \textit{multivariate Riordan arrays}.
They work in the context of a power series ring of Krull dimension $d$ over
a field $K$.  They explain that such a ring can be identified (in various ways) with
what we call $\F_d(K)$ by selecting a \textit{set of variables} $\textbf{Z}$,
i.e. a $d$-tuple of elements that generate the maximal ideal.  
We observe that the collection of possible
`sets of variables' is, in essence and ignoring order, the same as our $\G_d(K)$, because
composition with a formal map $g\in\G_d$ turns one `set of variables'
into another.
Starting with the idea of a Schauder basis for the ring (a sequence $(b_j)$ such
that every element of the ring is an infinite sum 
$\sum_{j=1}^\infty \lambda_j b_j$, converging in the natural topology,
with unique coefficients $\lambda_j\in K$) they
single out a special kind, which they call a \textit{Riordan-Schauder basis}.
The key ingredient here is the imposition of some linear
order on the set of monomials (our $S_d$), putting them into
a sequence.  We observe that a general Riordan-Schauder
basis for the ring $\F_d$ is in fact what you get when you compose
any such  sequence of monomials with any $g\in\G_d$.
With this setup, they can define a \textit{Lagrange group with unit} $\textbf{Z}$
as a collection of `sets of variables', 
and a \textit{Riordan monoid with respect to $\textbf{Z}$} as a 
set of products,
containing a \textit{Riordan group with respect to } $\textbf{Z}$ with an 
corresponding \textit{Appell subgroup}.  They represent the elements of
this group by two-by-two matrices of elements of $K$, using the
Riordan-Schauder basis, and they call these matrices \textit{Riordan arrays}.  
They give a version of the Fundamental Theorem.
They have to be careful about the character of the order imposed on
the monomials.  Apart from details, it seems that
for each choice of units and basis order, the groups
of Riordan arrays they construct are in fact isomorphic to our
$\RI_d(K)$, because each is the semidirect product of groups
isomorphic to $\F_d^\times$ and $\G_d$.
It would be more accurate to say that the Cheon-Huang-Kim
groups are isomorphic to $\Hol_C(\F_d,^\times)$, where
$C$ is the category of \textit{topological} $K$-algebras
and continuous $K$-algebra homomorphisms.
These are isomorphic to $\RI_d$ because of the automatic
continuity of $K$-algebra automorphisms of $\F_d$,
when $\F_d$ is equipped with the natural topology.

\subsection{}
We suggest that the focus on the order of the monomials or
on the order of the Riordan-Schauder basis is perhaps a distraction
from the essentials in the study of the Riordan
groups in higher dimensions. The essential thing to focus
on is the action on monomials, and the semigroup structure
of $S_d$.  This is not to say that for a specific application
it may not be useful to order the monomials.

\subsection{} 
The Fundamental Theorem in Cheon-Huang-Kim is an immediate consequence
of the facts that the elements of the Riordan monoid determine
\textit{continuous} linear endomorphims of a topological vector
space over $K$ that has a Schauder basis, and the topology is
the inductive limit of the discrete topologies on the
spans of finite subsets of the basis. Noting this,
one remarks that in our context the fundamental equation \ref{E:2} can also be proven
by factoring our map $M$ of Section 3 through the space
of continuous $K$-linear endomorphisms of $\F_d$, and
using the graded basis of monomials, without worrying about
the order of the monomials of each degree.

\subsection{}
We close with a few remarks on functoriality.

	$\Hol_C$ is not, in general, a functor from $C$ to the category
	of groups. However, if we define
	$C^\iso$ to be the category having the same objects
	as $C$ but only $C$-invertible arrows $A\to B$
	as arrows, then \\
	(1) $C\mapsto C^\iso$ is a functor on the category
	of categories,\\
	(2) the identity map $C \to C$ is a functor
	from $C^\iso$ to $C$,
	and\\
	(3) the restriction $\Hol_{C^\iso}$ of $\Hol_C$ to
	$C^\iso$ is a functor from $C^\iso$ to the category of groups.

\section*{Acknowledgement}
	I am grateful to John Murray for useful conversation on the subject of this paper,
	and to the referee for a careful reading, helpful suggestions and corrections.

\end{document}